\numberwithin{equation}{section}
\newtheorem{thm}{Theorem}[section]
\newtheorem{lem}[thm]{Lemma}
\newtheorem{cor}[thm]{Corollary}
\theoremstyle{definition}
\newtheorem{rem}[thm]{Remark}
\newcommand\R{{\mathbb R}}
\newcommand\C{{\mathbb C}}
\newcommand\goto{\mathop{\longrightarrow}}
\newcommand\Tma{T_{\mathrm{max}}}
\newcommand\union{\mathop{\cup}}
\begin{document}

\title{Continuous dependence for NLS in fractional order spaces}

\author[Thierry Cazenave]{Thierry Cazenave$^1$}

\author[Daoyuan Fang]{Daoyuan Fang$^{2}$}

\author[Zheng Han]{Zheng Han $^{2,3}$}

\address{$^{1,2}$
Department of Mathematics, Zhejiang University, Hangzhou, Zhejiang 310027, People's Republic of China}

\curraddr[Thierry Cazenave]{{\sc Universit\'e Pierre et Marie Curie \& CNRS, Laboratoire Jacques-Louis Lions,
B.C.~187, 4 place Jussieu, 75252 Paris Cedex 05, France}}

\email[Thierry Cazenave]{\href{mailto:thierry.cazenave@upmc.fr}{thierry.cazenave@upmc.fr}}
\email[Daoyuan Fang]{\href{mailto:dyf@zju.edu.cn}{dyf@zju.edu.cn}}
\email[Han Zheng]{\href{mailto:hanzheng5400@yahoo.com.cn}{hanzheng5400@yahoo.com.cn}}

\thanks{$^1$Research supported in part by Zhejiang University's Pao Yu-Kong
International Fund}
\thanks{$^2$Research supported by NSFC 10871175, 10931007, and Zhejiang NSFC Z6100217}

\thanks{$^3$Corresponding author}

\subjclass[2010] {Primary: 35Q55, Secondary:  35B30, 46E35}

\keywords{Schr\"o\-din\-ger's equation, initial value problem, continuous dependence, fractional order Sobolev spaces, Besov spaces}

\begin{abstract}
For the nonlinear Schr\"o\-din\-ger equation
 $iu_t+ \Delta u+ \lambda  |u|^\alpha u=0$ in $\R^N $,
 local existence of solutions in $H^s$ is well known
  in the $H^s$-subcritical and critical cases $0<\alpha \le 4/(N-2s)$, where $0<s<
  \min\{ N/2, 1\}$.
However, even though the solution is constructed by a fixed-point technique,
continuous dependence in $H^s$ does not follow from the contraction mapping argument.
In this paper, we show that the solution depends continuously on the initial value in the sense that the local flow is continuous $H^s \to H^s$. If, in addition, $\alpha \ge 1$ then the flow is locally Lipschitz.
\end{abstract}

\maketitle

\section{Introduction} \label{Intro}
In this paper, we study the continuity of the solution map $\varphi \mapsto u$ for
 the nonlinear Schr\"o\-din\-ger equation
\begin{equation} \label{NLS} \tag{NLS}
\begin{cases}
iu_t +\Delta u+ g(u)=0, \\ u(0)= \varphi ,
\end{cases}
\end{equation}
in $H^s (\R^N ) $, where $N\ge 1$ and
\begin{equation} \label{fHypu}
0 < s < \min \Bigl\{ 1, \frac {N} {2} \Bigr\}.
\end{equation}
We assume that the nonlinearity $g$ satisfies
\begin{equation} \label{fHypd}
g\in C^1(\C, \C),\quad g(0)= 0,
\end{equation}
and
\begin{equation} \label{fHypt}
 |g'(u) |\le A+ B  |u|^\alpha ,
\end{equation}
for all $u\in \C$, where
\begin{equation} \label{fHypq}
0<\alpha \le  \frac {4} {N-2s}.
\end{equation}
Under these assumptions, for every
initial value $\varphi \in H^s (\R^N ) $, there exists a local solution $u\in C([0,T], H^s (\R^N ) )$ of~\eqref{NLS}, which is unique under an appropriate auxiliary condition.
More precisely,  the following result is well known. (Here and in the rest of the paper,
 a pair $(q,r)$ is called admissible if $2\le r< 2N/(N-2)$ ($2\le r<\infty $ if $N=1,2$) and $2/q= N(1/2 -1/r)$.)

\begin{thm}[\cite{CWHs,Katou,CLN}] \label{echysecpzu}
Assume~\eqref{fHypu}--\eqref{fHypq} and let
 $(\gamma ,\rho )$ be the admissible pair
 defined by
\begin{equation} \label{fchysecpzt}
\rho = \frac {N(\alpha +2)} {N+s\alpha },\quad \gamma =
\frac {4(\alpha +2)} {\alpha (N-2s)}.
\end{equation}
Given $\varphi \in H^s(\R^N) $, there exist $0<\Tma \le \infty $
and a unique, maximal solution $u\in
C([0,\Tma), H^s(\R^N ))\cap L^\gamma _{\rm loc}([0,\Tma), B^s _{\rho
,2} (\R^N ))$ of~\eqref{NLS}, in the sense that
\begin{equation} \label{INLS}
u(t)= e^{it\Delta }\varphi  +i \int _0^t e^{i(t-s) \Delta } g(u(s))\,ds,
\end{equation}
for all $0\le t<\Tma$, where
$(e^{it\Delta }) _{ t\in \R }$ is the Schr\"o\-din\-ger group.
Moreover, $u\in L^q _{\rm loc}([0, \Tma),B^s_{ r,2} (\R^N ))$ for
every admissible pair $(q,r)$ and $u$ depends continuously on $\varphi $ in the following
sense. There exists a time $T \in (0, \Tma)$ such that
 if $\varphi_n\to \varphi$ in $H^s(\R^N )$ and if
$u_n$ denotes the solution of~\eqref{NLS} with the initial value
$\varphi_n$, then $ \Tma(\varphi _n) >T$ for all
sufficiently large $n$ and $u_n$ is bounded in $L^q ((0,
T),B^s_{ r,2} (\R^N ))$ for any admissible pair $(q,r)$. Moreover, $u_n\to u$
in $L^q ((0,T), B^{s-\varepsilon }_{ r,2} (\R^N ))$ as $n\to \infty $ for all $\varepsilon >0$ and all admissible pairs $(q,r)$.
 In particular, $u_n\to u$
in $C([0,T], H^{s-\varepsilon }(\R^N ))$ for all $\varepsilon >0$.
\end{thm}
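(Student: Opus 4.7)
The plan is the Kato/Cazenave--Weissler fixed-point scheme, in which the ball is taken in a strong Strichartz norm involving the Besov space $B^s_{\rho,2}$ but the contraction is performed in a weaker $L^2 \cap L^\rho$ metric. Specifically, set
\begin{equation*}
X_T := L^\infty((0,T), H^s(\R^N)) \cap L^\gamma((0,T), B^s_{\rho,2}(\R^N))
\end{equation*}
and, for $M$ a suitable multiple of $\|\varphi\|_{H^s}$, work in the closed ball $E_T^M := \{u \in X_T : \|u\|_{X_T} \le M\}$, equipped with the distance
\begin{equation*}
d(u,v) := \|u-v\|_{L^\infty((0,T),L^2)} + \|u-v\|_{L^\gamma((0,T),L^\rho)}.
\end{equation*}
The exponents~\eqref{fchysecpzt} are tuned precisely so that, via a Sobolev embedding of $B^s_{\rho,2}$ into a Lebesgue space that absorbs the factor $|u|^\alpha$, H\"older in space and time, and the Strichartz estimate in the Besov scale, the Duhamel operator
\begin{equation*}
\Phi(u)(t) := e^{it\Delta}\varphi + i\int_0^t e^{i(t-\tau)\Delta} g(u(\tau))\,d\tau
\end{equation*}
maps $E_T^M$ into itself and, using the pointwise bound $|g(u)-g(v)| \le (A + B|u|^\alpha + B|v|^\alpha)|u-v|$ from~\eqref{fHypt}, is a contraction in $d$ for $T$ small depending only on $\|\varphi\|_{H^s}$. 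Extending the unique fixed point to its maximal interval $[0,\Tma)$ and verifying that $u \in L^q_{\rm loc}([0,\Tma), B^s_{r,2})$ for every admissible $(q,r)$ then follows from reinserting $u$ into the Duhamel formula and applying the full Strichartz inequality, which controls every admissible Besov norm by the one already in hand.

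For the continuous dependence statement, if $\varphi_n \to \varphi$ in $H^s$ the same $M$ and $T$ work for all large $n$, so $u_n$ exists on $[0,T]$ with $\|u_n\|_{X_T} \le M$. Contraction in $d$ yields $u_n \to u$ in $L^\infty((0,T),L^2) \cap L^\gamma((0,T),L^\rho)$; interpolating this weak convergence against the uniform Strichartz bound $\|u_n - u\|_{L^q((0,T), B^s_{r,2})} \le 2M$ extends the convergence to $L^q((0,T), L^r)$ for every admissible $(q,r)$. The final ingredient is an interpolation inequality (valid for $r \ge 2$) of the form
\begin{equation*}
\|w\|_{B^{s-\varepsilon}_{r,2}} \le C \|w\|_{L^r}^{\varepsilon/s} \|w\|_{B^s_{r,2}}^{1-\varepsilon/s},
\end{equation*}
applied pointwise in time to $w(t) = u_n(t) - u(t)$ and followed by H\"older in $t$, which delivers $u_n \to u$ in $L^q((0,T), B^{s-\varepsilon}_{r,2})$. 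The special case $(q,r) = (\infty,2)$ yields the stated convergence in $C([0,T], H^{s-\varepsilon})$.

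The main obstacle---and the reason one cannot simply take $\varepsilon = 0$---is that under~\eqref{fHypd}--\eqref{fHypt} the nonlinearity $g$ is only $C^1$: no chain rule for fractional derivatives produces a clean Lipschitz estimate for $g(u) - g(v)$ in the strong Besov norm $B^s_{\rho',2}$ with constant depending locally on $\|u\|_{B^s_{\rho,2}}$ and $\|v\|_{B^s_{\rho,2}}$, which is what a direct contraction at the level of $X_T$ would require. The whole two-norm fixed-point scheme is designed to sidestep this defect, at the price of an arbitrarily small derivative loss that is then recovered via the interpolation step above.
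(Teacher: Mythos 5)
Your architecture is the standard Cazenave--Weissler/Kato scheme that this theorem is quoted from (ball in $L^\infty((0,T),H^s)\cap L^\gamma((0,T),B^s_{\rho,2})$, contraction in a Lebesgue-norm metric, then interpolation with an $\varepsilon$-loss), so the overall route is the intended one. However, there is a genuine gap in how you close the fixed point: you claim that $\Phi$ is a contraction on $E_T^M$ ``for $T$ small depending only on $\|\varphi\|_{H^s}$''. That is correct only in the subcritical range $\alpha<\frac{4}{N-2s}$, where H\"older in time yields a factor $T^{\frac{4-\alpha(N-2s)}{4}}$ with a strictly positive exponent. The theorem also covers the critical case $\alpha=\frac{4}{N-2s}$, where this exponent vanishes: no positive power of $T$ is gained, and the existence time cannot be bounded below in terms of $\|\varphi\|_{H^s}$ alone. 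In that case the smallness has to come from $\|e^{i\cdot\Delta}\varphi\|_{L^\gamma((0,T),B^s_{\rho,2})}$ being small (as in~\eqref{fPrelimu}), a quantity that tends to $0$ as $T\to0$ for fixed $\varphi$ but not uniformly on bounded sets of $H^s$; one then works in a ball whose radius is tied to this quantity, and the $A$-part of~\eqref{fHypt} (which is allowed to be nonzero here, unlike in Theorem~\ref{eMain}) must be handled separately through the $L^1((0,T),L^2)$ dual norm, where a genuine factor of $T$ is available. Correspondingly, your statement that ``the same $M$ and $T$ work for all large $n$'' is, in the critical case, justified not by norm convergence alone but by the Strichartz bound $\|e^{i\cdot\Delta}(\varphi_n-\varphi)\|_{L^\gamma((0,T),B^s_{\rho,2})}\le C\|\varphi_n-\varphi\|_{H^s}$ (cf.~\eqref{fSplzu}), which shows that $\varphi_n$ satisfies the same smallness condition for large $n$. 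As written, your argument proves the theorem only for $\alpha<\frac{4}{N-2s}$.

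Two secondary points you should make explicit. Banach's theorem requires $(E_T^M,d)$ to be complete, which is not automatic since the ball is taken in a stronger norm than the metric: one uses that $L^\gamma((0,T),B^s_{\rho,2})$ is reflexive (and $L^\infty H^s$ a dual space), so the ball is closed under $d$-limits by weak compactness and lower semicontinuity of the norms. Also, uniqueness is asserted in the full class $C([0,\Tma),H^s)\cap L^\gamma_{\rm loc}([0,\Tma),B^s_{\rho,2})$, not merely in the ball, which needs the usual additional step (any two such solutions have finite strong norms on a small interval, so the same difference estimates in the weak metric force coincidence, and one iterates). The concluding interpolation steps, including $\|w\|_{B^{s-\varepsilon}_{r,2}}\le C\|w\|_{L^r}^{\varepsilon/s}\|w\|_{B^s_{r,2}}^{1-\varepsilon/s}$, are fine and are indeed how the $\varepsilon$-loss convergence is obtained in the cited proofs.
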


Theorem~\ref{echysecpzu} goes back to~\cite{CWHs,Katou}. The precise statement which we give here is taken from\cite[Theorems~4.9.1 and~4.9.7]{CLN}.
The admissible pair $(\gamma ,\rho )$ corresponds to one particular choice of an auxiliary space, which ensures that the equation~\eqref{NLS} makes sense and that
 the solution is unique. See~\cite{CWHs,Katou,CLN} for details.
Under certain conditions on $\alpha $ and $s$, an auxiliary space is not necessary:
the equation makes sense by Sobolev's embedding and
uniqueness in $C([0,T], H^s (\R^N ) )$ holds ``unconditionally", see~\cite{Katou, FurioliT, Rogers, WinT}.

We note that the continuous dependence statement in  Theorem~\ref{echysecpzu} is weaker
than the expected one (i.e. with $\varepsilon =0$).
Indeed, Theorem~\ref{echysecpzu} is proved
by applying  a fixed point argument to equation~\eqref{INLS}, so one would expect that the
dependence of the solution  on the initial value is locally Lipschitz.
However, the metric space in which one applies
Banach's fixed point theorem involves Sobolev (or Besov) norms of order $s$, while the distance only involves Lebesgue norms. (The reason for that choice of the distance is that the nonlinearity need not be locally Lipschitz for Sobolev or Besov norms of positive order.)
Thus the flow is locally Lipschitz for Lebesgue norms, and continuous dependence
in the sense of Theorem~\ref{echysecpzu} follows by interpolation inequalities. See~\cite{CWHs,Katou} for details.

In this paper, we show that continuous dependence holds in $H^s$ in the standard sense
under the assumptions of Theorem~\ref{echysecpzu} (with an extra condition in the critical case
$\alpha = \frac {4} {N-2s}$). More precisely, our main result  is the following.

\begin{thm} \label{eMain}
Assume~\eqref{fHypu}--\eqref{fHypq} and suppose further that
\begin{equation} \label{fSbCr}
A=0  \quad  \text{if}\quad  \alpha = \frac {4} {N-2s},
\end{equation}
where $A$ is the constant in~\eqref{fHypt}.
The  solution of~\eqref{NLS} given by Theorem~$\ref{echysecpzu}$
depends continuously on $\varphi $ in the following sense.
\begin{enumerate}  [{\rm (i)}]

\item \label{eMain:u}
The mapping $\varphi \mapsto \Tma (\varphi )$
is lower semicontinuous $H^s (\R^N ) \to (0,\infty ]$.

\item \label{eMain:d}
If $\varphi_n\to \varphi$ in $H^s(\R^N )$ and if
$u_n$ (respectively, $u$) denotes the solution of~\eqref{NLS} with the initial value
$\varphi_n$ (respectively, $\varphi $), then  $u_n\to u$
in $L^q ((0, T),B^s_{ r,2} (\R^N ))$ for all
 $0<T< \Tma(\varphi )$ and all admissible pair $(q,r)$.
 In particular, $u_n\to u$
in $C([0,T], H^{s }(\R^N ))$.
\end{enumerate}
\end{thm}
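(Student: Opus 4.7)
The approach is to upgrade the $H^{s-\varepsilon}$ convergence of Theorem~\ref{echysecpzu} to genuine $H^s$ convergence by running a contraction-type argument directly in the Besov--Strichartz space $L^\gamma(B^s_{\rho,2})$ on short subintervals and iterating across finitely many of them to cover any $[0,T]$ with $T<\Tma(\varphi)$. Part~(i) will follow from part~(ii): the blow-up alternative underlying Theorem~\ref{echysecpzu} forces $\|u_n\|_{L^\gamma((0,\Tma(\varphi_n)),B^s_{\rho,2})}=\infty$ whenever $\Tma(\varphi_n)<\infty$, contradicting the uniform bound produced by~(ii). So I focus on~(ii). Given $\varphi_n\to\varphi$ in $H^s$ and $T<\Tma(\varphi)$, I partition $[0,T]$ into finitely many subintervals $I_k=[t_k,t_{k+1}]$ on which suitable Strichartz-type norms of $u$ are smaller than a threshold $\eta$ to be fixed later (this is possible by absolute continuity of the integral, since $u$ belongs to the relevant Strichartz spaces on $[0,T]$). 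I then argue inductively on $k$ that $u_n\to u$ in $L^q(I_k,B^s_{r,2})$ for every admissible $(q,r)$, hence in $C(I_k,H^s)$.

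Fix $I=I_k$ and set $w_n=u_n-u$. Assume by induction that $w_n(t_k)\to 0$ in $H^s$. Since the Kato contraction used inside the proof of Theorem~\ref{echysecpzu} runs in a Lebesgue-based metric, the flow is locally Lipschitz in $L^q(I,L^r)$; hence $u_n\to u$ in every $L^q(I,L^r)$, together with a uniform bound $M$ on $\|u_n\|_{L^q(I,B^s_{r,2})}$. Strichartz in Besov norms applied to the Duhamel identity gives
\begin{equation*}
\|w_n\|_{L^\gamma(I,B^s_{\rho,2})}\le C\|w_n(t_k)\|_{H^s}+C\|g(u_n)-g(u)\|_{L^{\gamma'}(I,B^s_{\rho',2})}.
\end{equation*}
The crux is a nonlinear estimate of the form
\begin{equation*}
\|g(u_n)-g(u)\|_{L^{\gamma'}(I,B^s_{\rho',2})}\le C\eta^\alpha\,\|w_n\|_{L^\gamma(I,B^s_{\rho,2})}+\delta_n,
\end{equation*}
with $\delta_n\to 0$. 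I would derive it by writing $g(u_n)-g(u)=w_n\int_0^1 g'(u+\theta w_n)\,d\theta$ and applying a fractional Leibniz rule for $B^s_{\rho',2}$ together with a fractional chain rule for the composition with $g'$. The estimate splits into two types of contributions: when the $s$-derivative falls on the $g'$-factor, the resulting $B^s_{r,2}$-norm of $u+\theta w_n$ is $M$-bounded and pairs with a Lebesgue-type norm of $w_n$, which tends to $0$ by the Kato step; when it falls on $w_n$, the complementary factor is a Lebesgue $\alpha$-power of $u+\theta w_n$, bounded by $C\eta^\alpha$ by the choice of the partition. Choosing $\eta$ small to absorb the first term, we obtain $\|w_n\|_{L^\gamma(I,B^s_{\rho,2})}\to 0$; one further application of Strichartz promotes this to every admissible $(q,r)$ and to $C(I,H^s)$, in particular $w_n(t_{k+1})\to 0$ in $H^s$, closing the induction.

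The main obstacle is the fractional Leibniz-plus-chain estimate in $B^s_{\rho',2}$ for a $C^1$ nonlinearity whose derivative has only the H\"older-type growth~\eqref{fHypt}, with Hölder exponents matched to admissible Strichartz pairs so that the final time integration actually closes. In the $H^s$-critical case $\alpha=4/(N-2s)$ the entire estimate is scale-invariant and the contribution of the $A$-term in~\eqref{fHypt} cannot be made small by shrinking the time interval; this is exactly why the extra hypothesis~\eqref{fSbCr} is imposed. I would carry out the estimate at the Littlewood--Paley level, using the equivalence of $B^s_{\rho,2}$ with a dyadic square function and exploiting $0<s<1$ to remain inside the regularity range where the composition with $g$ behaves well.
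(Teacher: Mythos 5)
Your overall architecture --- Strichartz estimates in Besov norms applied to the Duhamel formula, absorption of a Lipschitz-type term by smallness on short subintervals, a remaining ``lower order'' term tending to zero, and iteration across subintervals --- is the same as the paper's. The genuine gap is in the step you yourself call the crux: the estimate $\|g(u_n)-g(u)\|_{L^{\gamma'}(I,B^s_{\rho',2})}\le C\eta^\alpha\|w_n\|_{L^\gamma(I,B^s_{\rho,2})}+\delta_n$. You propose to obtain it by a fractional Leibniz rule applied to $w_n\int_0^1 g'(u+\theta w_n)\,d\theta$ together with a fractional chain rule for the composition with $g'$. But under \eqref{fHypd}--\eqref{fHypt}, $g'$ is merely continuous with growth $|g'(z)|\lesssim 1+|z|^\alpha$; in the model case $g(u)=\lambda|u|^\alpha u$ with $\alpha<1$ it is only $\alpha$-H\"older, and the hypotheses allow $\alpha\le s$. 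In that range there is no chain rule placing $g'(u+\theta w_n)$ in a Besov space of order $s$ when $u,u_n$ are only in $B^s_{r,2}$ (the known chain rules for H\"older-continuous functions require $s<\alpha$, or more than $s$ derivatives on $u$), so the paraproduct contribution in which the $s$ derivatives fall on the $g'$-factor cannot be bounded as you claim. This failure of local Lipschitz (or even Besov-boundedness of $g'(u)$) in positive-order norms is precisely the obstruction the paper circumvents: Lemmas~\ref{eLemLocLip} and~\ref{eLemLip} use the finite-difference characterization of $\dot B^s_{p,q}$ and never estimate $g'(u)$ in a Besov norm; the remainder $K(u,u_n)$ is only shown to be bounded and to tend to zero by dominated convergence (continuity of $g'$ plus a.e.\ convergence), a purely qualitative statement with no rate. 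Your sketch supplies no substitute for this.

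There is a second, independent gap in the critical case $\alpha=4/(N-2s)$. Even granting the nonlinear estimate, your $\delta_n$ is (bounded Besov norm) times a Lebesgue-type norm of $w_n$ which you claim tends to zero ``by the Kato step''. The norm actually needed is $\|w_n\|_{L^\gamma(I,L^\sigma)}$ with $\sigma$ as in \eqref{fSuplt}, and $(\gamma,\sigma)$ is not an admissible Lebesgue pairing: the Kato contraction gives convergence only in $L^\gamma L^\rho$ (hence in admissible $L^qL^r$), and interpolating with the bounded $B^s$-norms loses an $\varepsilon$ (cf.~\eqref{fZsplu}), which is affordable in the subcritical case but not when $\alpha\gamma/(\gamma-2)=\gamma$. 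The paper needs a separate ingredient here, Lemma~\ref{eKey}, based on an inhomogeneous Strichartz estimate for a non-admissible exponent pair in the spirit of Tao--Visan, and this is where \eqref{fSbCr} is really used (so that $|g'(u)|\le C|u|^\alpha$ closes that estimate) --- not merely, as you suggest, because the $A$-term cannot be made small by shrinking the interval. Without an argument of this kind your induction does not close in the critical case; relatedly, your deduction of part (i) from a blow-up alternative presupposes that the $u_n$ exist and converge on each subinterval, which is exactly what the missing steps must deliver.
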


Theorem~\ref{eMain} applies in particular to the model case $g(u)= \lambda  |u|^\alpha u$ with $\lambda \in \C$ and $0<\alpha \le 4/(N-2s)$.
If, in addition, $\alpha \ge 1$ then the dependence is in fact locally Lipschitz.
We summarize the corresponding results in the following corollary.

\begin{cor} \label{eMaind}
Assume~\eqref{fHypu} and
let $g(u)= \lambda  |u|^\alpha u$ with $\lambda \in \C$ and $0<\alpha \le \frac {4} {N-2s}$.
It follows that the  solution of~\eqref{NLS} given by Theorem~$\ref{echysecpzu}$
depends continuously on the initial value in the sense of Theorem~$\ref{eMain}$.
If, in addition, $\alpha \ge 1$ then the dependence is locally Lipschitz.
More precisely, let $\varphi \in H^s (\R^N ) $ and let $u$ be the corresponding solution of~\eqref{NLS}. Given $0<T<\Tma (\varphi )$ there exists $\delta >0$ such that if $\psi \in H^s (\R^N ) $ satisfies $ \|\varphi -\psi \| _{ H^s }\le \delta $ and $v$ is the corresponding solution of~\eqref{NLS}, then for every admissible pair $(q,r)$
\begin{equation}  \label{eMaind:u}
 \|u -v\|  _{ L^q ((0,T), B^s _{ r,2 }) } \le C  \|\varphi -\psi \| _{ H^s },
\end{equation}
where $C$ depends on $\varphi ,T, q,r$.
In particular, $  \|u -v\|  _{ L^\infty  ((0,T), H^s) } \le C  \|\varphi -\psi \| _{ H^s }$.
\end{cor}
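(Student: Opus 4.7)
The continuous-dependence claim should follow directly from Theorem~\ref{eMain}. The only thing to verify is that $g(u) = \lambda|u|^\alpha u$ satisfies the hypotheses of that theorem for every $\alpha > 0$: a quick computation of the Wirtinger derivatives $\partial_u g$ and $\partial_{\bar u} g$ shows that they are of size $|u|^\alpha$ and vanish continuously at the origin whenever $\alpha > 0$, so $g \in C^1(\C,\C)$ with $|g'(u)| \le C_\alpha |u|^\alpha$. Thus~\eqref{fHypt} holds with $A = 0$, which makes the extra critical-case assumption~\eqref{fSbCr} automatic, and Theorem~\ref{eMain} applies verbatim.

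For the locally Lipschitz statement under the additional hypothesis $\alpha \ge 1$, my plan is to revisit the Strichartz fixed-point setup behind Theorem~\ref{echysecpzu} but to measure differences of solutions in the full Besov norm $B^s_{r,2}$ rather than in the weaker Lebesgue norms used in the proof of Theorem~\ref{eMain}. Writing $w = u - v$, Duhamel's formula gives
\begin{equation*}
w(t) = e^{it\Delta}(\varphi - \psi) + i\int_0^t e^{i(t-\tau)\Delta}\bigl[g(u) - g(v)\bigr](\tau)\, d\tau,
\end{equation*}
so after applying the Besov-valued Strichartz estimates, everything reduces to a linear-in-difference nonlinear inequality of the form
\begin{equation*}
\|g(u) - g(v)\|_{L^{\gamma'}((0,T), B^s_{\rho',2})} \le C\bigl(\|u\|_{L^\gamma B^s_{\rho,2}}^\alpha + \|v\|_{L^\gamma B^s_{\rho,2}}^\alpha\bigr)\|u - v\|_{L^\gamma((0,T), B^s_{\rho,2})}.
\end{equation*}
With this in hand I would run the standard bootstrap: split $[0,T]$ into finitely many short subintervals on which the prefactor above can be absorbed on the left (using Theorem~\ref{eMain} to guarantee that $v$ stays close to $u$ in the auxiliary norms), iterate, and conclude~\eqref{eMaind:u} for the admissible pair $(\gamma,\rho)$. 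Re-injecting this bound into Strichartz for any other admissible pair $(q,r)$ yields the full~\eqref{eMaind:u}, and the choice $(q,r) = (\infty, 2)$ together with $B^s_{2,2} = H^s$ gives the desired $L^\infty((0,T), H^s)$ inequality.

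The main obstacle is the linear-in-difference Besov estimate itself. Classical fractional chain/Leibniz rules of Christ--Weinstein type already handle $\|g(u)\|_{B^s_{\rho,2}}$, but the version controlling $g(u) - g(v)$ linearly in $u - v$ is more delicate: the natural route is to write $g(u) - g(v) = \int_0^1 g'(\theta u + (1-\theta)v)(u-v)\, d\theta$ and combine a Besov-space product rule with a fractional-derivative bound on $g'(\theta u + (1-\theta)v)$. The hypothesis $\alpha \ge 1$ is essential here because it makes $g'$ locally Lipschitz on $\C$; for $0 < \alpha < 1$, only Hölder-type bounds on $g'$ are available and a linear inequality cannot be closed---consistent with the fact that Corollary~\ref{eMaind} only asserts Lipschitz dependence in the regime $\alpha \ge 1$.
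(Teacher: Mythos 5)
Your proposal is correct and follows essentially the same route as the paper: the first assertion is read off from Theorem~\ref{eMain} (with $A=0$, so that \eqref{fSbCr} is automatic), and for $\alpha \ge 1$ the Lipschitz bound rests on exactly the locally Lipschitz Besov estimate \eqref{fEstLip}, which the paper obtains from the refined inequality \eqref{feRemBSd} of Remark~\ref{eLemLocLipd} and then feeds into the fixed-point/contraction argument run with the stronger distance $\|u-v\|_{L^\gamma((0,T),B^s_{\rho,2})}$. The only real difference is cosmetic: you would re-derive that same estimate via the integral representation of $g(u)-g(v)$ together with a Besov product rule and a fractional chain rule for $g'$ (legitimate precisely because $g'$ is locally Lipschitz when $\alpha\ge 1$), whereas the paper proves it directly by finite differences, and your subinterval absorption bootstrap is the paper's ``standard arguments'' step.
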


We are not aware of any previous continuous dependence result for~\eqref{NLS}  in $H^s$ with noninteger $s$.
For integer $s$, the known results in the model case $g(u)=  \lambda  |u|^\alpha u$, $\lambda \in \C$ are the following. Continuous dependence in $L^2 (\R^N ) $ follows from~\cite{Tsutsumi} in the subcritical case $N\alpha <4$ and from~\cite{CWHs} in the critical case $N\alpha =4$.  Continuous dependence in $H^1 (\R^N ) $ is proved in~\cite{Katod} in the subcritical case $
(N-2)\alpha <4$ and in~\cite{CWHs, KenigM, TaoV, KillipV} in the critical case  $
(N-2)\alpha =4$. Continuous dependence in $H^2 (\R^N ) $ follows from~\cite{Katod} in the subcritical case $(N-4)\alpha <4$.

Our proof of Theorem~\ref{eMain} is based on the method used by Kato~\cite{Katod} to prove continuous dependence in $H^1 (\R^N ) $.
We briefly recall Kato's argument. Convergence in Lebesgue spaces holds by the
contraction mapping estimates, so the tricky part is the gradient estimate. By applying Strichartz estimates, this amounts in controlling $\nabla [g(u_n)- g(u)]$
in some $L^p$ space. However, $\nabla [g(u_n)- g(u)] = g'(u_n) [\nabla u_n -\nabla u]+ [g'(u_n) -g'(u)] \nabla u$. The term $g'(u_n) [\nabla u_n -\nabla u]$ is easily absorbed by the left-hand side of the inequality, and the key observation is that the remaining term $[g'(u_n) -g'(u)] \nabla u$ is of lower order, in the sense that the convergence of $u_n $ to $u$ in appropriate $L^p $ spaces implies that this last term converges to $0$.
We prove Theorem~\ref{eMain} by applying the same idea. The key argument of the proof is an estimate which shows that  $g(u)-g(v)$ is bounded in Besov spaces by a Lipschitz term (i.e. with a factor $u-v$) plus some lower order term. (See Lemmas~\ref{eLemLocLip} and~\ref{eLemLip} below.)
In the critical case $\alpha = 4/(N-2s)$, a further argument is required in order to show that $u_n $ to $u$ in the appropriate $L^p $ space (estimate~\eqref{fPrelimcb}). For this, following Tao and Visan~\cite{TaoV}, we use a Strichartz-type estimate for a non-admissible pair, and this is where
we use the assumption~\eqref{fSbCr}.
Local Lipschitz continuity in Corollary~\ref{eMaind} follows from a different, much simpler argument: in this case, the nonlinearity is locally Lipschitz in the appropriate Besov spaces.

In Theorem~\ref{eMain} we assume  $s<\min \{ 1, N/2\}$.
The assumption $s<N/2$ is natural, but the restriction $s<1$ is technical.
When $N\ge 3$, local existence in $H^s(\R^N )$ is known to hold for $0<s<N/2$,
in particular when $g(u)= \lambda  |u|^\alpha u$, under some extra assumption on $\alpha $ and $s$ that ensure that $g$ is sufficiently smooth. See~\cite{CWHs, Katou, Pecher}.
The limitation $s<1$ first appears in our Besov space estimates. These could possibly be extended to $s>1$. It also appears in a more subtle way. For example, the introduction of certain exponents~\eqref{fRDu} in the critical case explicitly requires $s<1$.
It is not impossible that the idea in~\cite{TaoV} of using intermediate Besov spaces of lower order can be applied when $s>1$.

We next mention a few open questions.
As observed above, the fixed point argument used in~\cite{CWHs,Katou,CLN} to prove Theorem~\ref{echysecpzu} does not show that the flow is locally Lipschitz in $H^s (\R^N ) $.
On the other hand, it does not show either that it is not locally Lipschitz.
This raises the following question: under the assumptions~\eqref{fHypu}--\eqref{fHypq}, is the flow of~\eqref{NLS} locally Lipschitz in $H^s$?
We suspect that the answer might be negative.
Note that in the pure power case $g(u)= \lambda  |u|^\alpha u$, Remark~\ref{eLemLocLipd} implies that the nonlinearity is locally H\"older continuous in the appropriate Besov spaces of order $s$.
Therefore it is natural to ask if the flow also is locally H\"older continuous in $H^s$.
Finally, we note that in the critical case $\alpha =\frac {4} {N-2s}$ Theorem~\ref{eMain}
imposes the restriction $A=0$ in~\eqref{fHypt}. Can this restriction be removed?

The rest of this paper is organized as follows. In Section~\ref{Besov} we establish estimates of $g(u)-g(v)$ in Besov spaces.
We complete the proof Theorem~\ref{eMain} in Section~\ref{Proof} in the subcritical case
$\alpha <\frac {4} {N-2s}$
and in Section~\ref{ProofCr} in the critical case $\alpha =\frac {4} {N-2s}$.
Section~\ref{ProofPwr} is devoted to the proof of Corollary~\ref{eMaind}.

\medskip
\noindent {\bf Notation.}\quad
Given $1\le p\le \infty $, we
denote by $p'$ its conjugate given by $1/p'= 1- 1/p$
and we consider the standard (complex-valued) Lebesgue spaces $L^p(\R^N ) $.
Given $1\le p, q\le \infty $ and $s \in \R$ we consider the usual (complex-valued) Sobolev and Besov spaces  $H^s(\R^N) $ and $B^s _{ p,q }(\R^N )$
and their homogeneous versions $\dot H^s(\R^N) $ and $\dot B^s _{ p,q }(\R^N )$.
See for example~\cite{BerghL, Triebel} for the definitions of these spaces and the
corresponding Sobolev's embeddings.
We denote by $(e^{it\Delta }) _{ t\in \R }$ the Schr\"o\-din\-ger group and
we will use without further reference the standard Strichartz estimates, see~\cite{Strichartz,
Yajima, Katot, KeelT}.

\section{Superposition operators in Besov spaces} \label{Besov}

The study of superposition operators in Besov spaces has a long history and
necessary conditions (sometimes necessary and sufficient conditions) on $g$ are known so that  $u\mapsto g(u)$ is a bounded map of certain Besov spaces. (See e.g.~\cite{RunstS} and the references therein.) On the other hand, few works study the continuity of such maps, among which~\cite{BourdaudL, BourdaudLS, BourdaudLSd, BourdaudMS, BrezisM},
but none of these works applies to such cases as $g(u)=  |u|^\alpha u$, which is a typical nonlinearity for~\eqref{NLS}. Besides,
 for our proof of Theorem~\ref{eMain} we do not need only continuity but instead a rather specific property, namely that $g(u)-g(v)$ can be estimated in a certain Besov space of order $s$ by a Lipschitz term (i.e. involving $u-v$ in some other Besov space of order $s$) plus a lower order term. We establish such estimates in the following two lemmas. The first one concerns functions $g$ such that $ |g'(u)|\le C  |u|^\alpha $, and the second functions $g$ that are globally Lipschitz.
(A function $g$ satisfying~\eqref{fHypd}-\eqref{fHypt} can be decomposed as the sum of two such functions.)
The proofs of these lemmas are  based on the
choice of an equivalent norm on $\dot B^s  _{ p,q }$ defined in terms of finite differences, and on rather elementary calculations.

\begin{lem} \label{eLemLocLip}
Let $0<s<1$, $\alpha >0$, $1\le q<\infty $ and $1\le p<r\le \infty $, and let $0<\sigma <\infty $ be defined by
\begin{equation} \label{fDfnSig}
\frac {\alpha } {\sigma }= \frac {1} {p}- \frac {1} {r}.
\end{equation}
We use the convention that $ \|u\| _{ L^\sigma  } = (\int  _{ \R^N  }  |u|^\sigma )^{\frac {1} {\sigma }}$
even if $\sigma <1$.
Let $g\in C^1(\C, \C)$ satisfy
\begin{equation}  \label{eLemLocLip:u}
 |g'(u)| \le C  |u|^\alpha ,
\end{equation}
for all $u\in \C$.
It follows that there exists a constant $C$ such that if $u,v\in \dot B^s  _{ r,q } (\R^N )$
and $ \|u\| _{ L^\sigma  }, \|v\| _{ L^\sigma  } <\infty $, then
\begin{equation}\label{eLemLocLip:d}
 \|g(v)- g(u)\| _{ \dot B^s  _{ p,q } } \le C \|v\| _{ L^\sigma  }^\alpha
   \|v-u\| _{ \dot B^s  _{ r,q } }  + K(u,v),
\end{equation}
where $K(u,v)$ satisfies
\begin{equation}\label{eLemLocLip:t}
 K(u,v) \le C ( \|u\| _{ L^\sigma  }^\alpha + \|v\| _{ L^\sigma  }^\alpha )  \|u\| _{ \dot B^s  _{ p,q } },
\end{equation}
and
\begin{equation}\label{eLemLocLip:q}
K(u, u_n) \goto  _{ n\to \infty  }0,
\end{equation}
if $(u_n) _{ n\ge 1 } \subset \dot B^s  _{ r,q } (\R^N ) $ and
$u\in  \dot B^s  _{r,q } (\R^N ) $ are such that $ \|u\| _{ L^\sigma  }  <\infty $
 and $ \|u_n -u\| _{ L^\sigma  } \to 0$ as $n\to \infty $.
\end{lem}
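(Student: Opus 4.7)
The plan is to use the standard characterization of the homogeneous Besov norm by first-order finite differences, valid for $0<s<1$:
\begin{equation*}
\|f\|_{\dot B^s_{p,q}} \sim \left(\int_{\R^N} \frac{\|\tau_h f - f\|_{L^p}^q}{|h|^{N+sq}}\, dh\right)^{1/q},
\end{equation*}
where $(\tau_h f)(x)=f(x+h)$. Applied to $f = g(v)-g(u)$, the question reduces to estimating $\|\tau_h[g(v)-g(u)] - [g(v)-g(u)]\|_{L^p}$ pointwise in $h$ and then integrating against $dh/|h|^{N+sq}$. The key algebraic step is a Leibniz-type decomposition of this second-order difference: writing by the fundamental theorem of calculus
\begin{equation*}
g(w(x+h)) - g(w(x)) = (w(x+h)-w(x))\, H_w(x,h), \quad H_w(x,h) := \int_0^1 g'\bigl((1-t)w(x) + tw(x+h)\bigr)\,dt,
\end{equation*}
I would split
\begin{equation*}
\bigl[g(\tau_h v)-g(\tau_h u)\bigr] - \bigl[g(v)-g(u)\bigr] = \bigl[\tau_h(v-u) - (v-u)\bigr] H_v + (\tau_h u - u)(H_v - H_u),
\end{equation*}
so that the first summand yields the Lipschitz contribution and the second defines $K(u,v)$.

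The Lipschitz and remainder pieces are then controlled by H\"older's inequality using the exponent relation $1/p = 1/r + \alpha/\sigma$ from~\eqref{fDfnSig}. The hypothesis~\eqref{eLemLocLip:u} gives the pointwise bounds $|H_w(x,h)| \le C(|w(x)|^\alpha + |w(x+h)|^\alpha)$ and $|H_v - H_u| \le C(|u|^\alpha + |v|^\alpha + |\tau_h u|^\alpha + |\tau_h v|^\alpha)$, from which Minkowski (if $\sigma/\alpha \ge 1$) or the subadditivity of $t \mapsto t^{\sigma/\alpha}$ (if $\sigma/\alpha<1$, consistent with the stated convention) produces $\|H_w(\cdot,h)\|_{L^{\sigma/\alpha}} \le C\|w\|_{L^\sigma}^\alpha$ and an analogous bound for $H_v - H_u$. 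H\"older then yields $\|[\tau_h(v-u)-(v-u)]\,H_v\|_{L^p} \le C\|v\|_{L^\sigma}^\alpha\,\|\tau_h(v-u)-(v-u)\|_{L^r}$, whose $L^q_{|h|^{-N-sq}dh}$ integration delivers the stated Lipschitz term $C\|v\|_{L^\sigma}^\alpha\|v-u\|_{\dot B^s_{r,q}}$. The same H\"older step applied to the second summand produces the a priori estimate~\eqref{eLemLocLip:t}.

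The main obstacle is the qualitative convergence~\eqref{eLemLocLip:q}: because $g'$ is only assumed continuous, no quantitative modulus is available and a purely soft argument is required. My plan is, after extracting a subsequence along which $u_n \to u$ pointwise almost everywhere (permitted by the $L^\sigma$ convergence), to apply dominated convergence in $x$ to deduce $H_{u_n}(\cdot,h) \to H_u(\cdot,h)$ in $L^{\sigma/\alpha}$ for each fixed $h$, using the majorant $C(|u|^\alpha+|u_n|^\alpha+|\tau_h u|^\alpha+|\tau_h u_n|^\alpha)$, whose integrability is guaranteed by the uniform bound on $\|u_n\|_{L^\sigma}$. Consequently $\|(\tau_h u - u)(H_{u_n}-H_u)(\cdot,h)\|_{L^p} \to 0$ for each $h$, and a second dominated convergence step in $h$, with majorant supplied by~\eqref{eLemLocLip:t}, yields $K(u,u_n) \to 0$ along the subsequence; a routine subsequence-extraction argument upgrades this to convergence of the full sequence. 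The delicate point is orchestrating the double dominated convergence while keeping uniform majorants in both the $x$ and $h$ variables.
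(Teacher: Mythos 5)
Your proposal follows essentially the same route as the paper: the finite-difference characterization of $\dot B^s_{p,q}$ for $0<s<1$, the identical splitting of the second difference of $g(v)-g(u)$ into a Lipschitz piece $[\tau_h(v-u)-(v-u)]H_v$ and a remainder $(\tau_h u-u)(H_v-H_u)$ which defines $K(u,v)$, H\"older's inequality through the relation $1/p=1/r+\alpha/\sigma$ (note that in fact $\sigma/\alpha\ge 1$ automatically, since $\alpha/\sigma=1/p-1/r\le 1$, so the subadditivity alternative is never needed), and a contradiction/subsequence argument with two applications of dominated convergence for the qualitative statement $K(u,u_n)\to 0$.

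The one step whose justification, as written, does not suffice is the inner dominated convergence in $x$: the majorant $C\bigl(|u|^\alpha+|u_n|^\alpha+|\tau_h u|^\alpha+|\tau_h u_n|^\alpha\bigr)$ depends on $n$, and a uniform bound on $\|u_n\|_{L^\sigma}$ does not license dominated convergence — you need an $n$-independent integrable dominating function (or uniform integrability via Vitali). The standard repair, which is exactly what the paper does, is to observe that $|u_n-u|^\sigma\to 0$ in $L^1(\R^N)$, so along a further subsequence $u_n\to u$ a.e.\ \emph{and} $|u_n-u|^\sigma\le w$ a.e.\ for a fixed $w\in L^1(\R^N)$, whence $|u_n|^\sigma\le C(|u|^\sigma+w)$ is a genuine $n$-independent majorant; since your contradiction framework already works along subsequences, this costs nothing. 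Relatedly, for the outer convergence in $h$ the dominating function is not the already-integrated a priori bound on $K$, but the pointwise-in-$h$ estimate $\|(\tau_h u-u)(H_{u_n}-H_u)\|_{L^p}\le C\|\tau_h u-u\|_{L^r}$ with $C$ uniform in $n$ (from the uniform $L^\sigma$ bound), combined with $\|\tau_h u-u\|_{L^r}^q\,|h|^{-N-sq}\in L^1(dh)$, which holds because $u\in\dot B^s_{r,q}$; with these two points made precise, your argument coincides with the paper's proof.
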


\begin{proof}
We denote by $\tau _y$ the translation operator defined for $y\in \R^N $ by
$\tau _y u(\cdot )= u(\cdot -y)$ and
we recall that (see e.g. Section~5.2.3, Theorem~2, p.~242 in~\cite{Triebel})
\begin{equation} \label{fLemLipu}
 \|u\| _{ \dot B^s  _{ p,q } }  \approx \Bigl(\int  _{  \R^N   }  \|\tau _y u -u\|_{L^p (\R^N )}^q  |y|^{-N- sq}
dy\Bigr)^{\frac {1} {q}} .
\end{equation}
Given $z_1, z_2\in \C$, we have
\begin{multline*}
g(z_1)- g(z_2)= (z_1-z_2) \int _0^1 \partial _z g (z_2+\theta (z_1-z_2)) \,d\theta + \\
( \overline{z_1-z_2} ) \int _0^1 \partial _{ \overline{z} }g (z_2+\theta (z_1-z_2)) \,d\theta,
\end{multline*}
which we write, in short,
\begin{equation} \label{fLemLipd}
g(z_1)- g(z_2)= (z_1-z_2) \int _0^1 g' (z_2+\theta (z_1-z_2)) \,d\theta.
\end{equation}
We set
\begin{equation} \label{fLemLipt}
A(u,v,y) (\cdot )= \tau _y [ g(v)- g(u) ]- [g(v)- g(u)],
\end{equation}
so that by~\eqref{fLemLipu}
\begin{equation} \label{fLemLipq}
 \| g(v)- g(u)\| _{ \dot B^s _{ p,q } } \le C
 \Bigl(\int  _{  \R^N   }  \|A(u,v,y)\|_{L^p}^q  |y|^{-N- sq}
dy\Bigr)^{\frac {1} {q}} .
\end{equation}
We deduce from~\eqref{fLemLipt} and~\eqref{fLemLipd} that
\begin{multline*}
A (u,v,y)=
[  g(\tau _y v)- g(v)] - [  g(\tau _y u)- g(u)]
\\ =  (\tau _y v-v) \int _0^1 g' (v +\theta ( \tau _y v -v)) \,d\theta
-  (\tau _y u -u ) \int _0^1 g' (u+\theta (\tau _y u-u)) \,d\theta ;
\end{multline*}
and so
\begin{multline} \label{fLemLipc}
A(u,v,y) =  [\tau _y (v-u) -(v-u)] \int _0^1 g' (v +\theta ( \tau _y v -v)) \,d\theta \\
+   (\tau _y u -u )  \int _0^1 [g' (v+\theta (\tau _y v-v))
-  g' (u+\theta (\tau _y u-u)) ]\,d\theta
  \\ = : A_1(u,v,y) + A_2(u,v,y).
\end{multline}
It follows from~\eqref{eLemLocLip:u} that
\begin{equation*}
 |A_1(u,v,y)| \le C ( |v|^\alpha  +  |\tau _y v|^\alpha )  | \tau _y (v-u) -(v-u) |.
\end{equation*}
We deduce by H\"older's inequality (note that $\sigma /\alpha \ge 1$) that
\begin{equation*}
 \|A_1(u,v,y)\| _{ L^p } \le C  \|v\| _{ L^\sigma  }^\alpha   \|\tau _y (v-u) -(v-u)\| _{ L^r },
\end{equation*}
from which it follows by applying~\eqref{fLemLipu} that
\begin{equation} \label{fLemLocLipu}
 \Bigl(\int  _{  \R^N   }  \|A_1(u,v,y)\|_{L^p }^q  |y|^{-N- sq}
dy\Bigr)^{\frac {1} {q}}  \le C  \|v\| _{ L^\sigma  }^\alpha      \|v-u\| _{ \dot B^s  _{ r,q } }.
\end{equation}
We set
\begin{equation} \label{fLemLiph}
K(u,v)=  \Bigl(\int  _{  \R^N   }  \|A_2(u,v,y)\|_{L^p}^q  |y|^{-N- sq}
dy\Bigr)^{\frac {1} {q}},
\end{equation}
and formula~\eqref{eLemLocLip:d}  follows from~\eqref{fLemLipq}, \eqref{fLemLipc},
\eqref{fLemLocLipu} and~\eqref{fLemLiph}.
Next, it follows from assumption~\eqref{eLemLocLip:u} that
\begin{equation} \label{fLemLocLipub}
 |A_2(u,v,y)| \le C ( |u|^\alpha  +  |\tau _y u|^\alpha + |v|^\alpha  +  |\tau _y v|^\alpha )  | \tau _y u -
 u|.
\end{equation}
Applying H\"older's inequality, we deduce as above that
\begin{equation} \label{fLemLocLipd}
 \Bigl(\int  _{  \R^N   }  \|A_2(u,v,y)\|_{L^p }^q  |y|^{-N- sq}
dy\Bigr)^{\frac {1} {q}}  \le C(  \|u\| _{ L^\sigma  }^\alpha  +  \|v\| _{ L^\sigma  }^\alpha  )    \|u\| _{ \dot B^s  _{ r,q } },
\end{equation}
which proves~\eqref{eLemLocLip:t}.  It remains to show~\eqref{eLemLocLip:q}.
Let $(u_n) _{ n\ge 1 }$ and $u$ be as in the statement, and assume by contradiction that
 there is a subsequence, still denoted by $(u_n) _{ n\ge 1 }$, and $\varepsilon >0$ such that
\begin{equation}  \label{fLemLocLipt}
K(u, u_n) \ge \varepsilon .
\end{equation}
We set
\begin{equation} \label{fLemLipud}
 B_n (y,\theta )  (\cdot )= |\tau _y u -u |  |g' (u_n+\theta (\tau _y u_n-u_n))
-  g' (u+\theta (\tau _y u-u))|,
\end{equation}
and we deduce from~\eqref{eLemLocLip:u} that
\begin{equation}  \label{fLemLocLipq}
 | B_n (y,\theta )| \le C ( |u|^\alpha +  |\tau _y u|^\alpha + |u_n|^\alpha
 +  |\tau _y u_n|^\alpha )  |\tau _y u -u | .
\end{equation}
It follows from~\eqref{fLemLipc}   that
\begin{equation} \label{fLemLocLipc}
 \|A_2 (u,u_n,y)\| _{ L^p }^p \le \int _0^1 \int  _{ \R^N  }
 B_n (y, \theta ) ^p
  \,dx \,d\theta .
\end{equation}
Note that $ |u_n-u|^\sigma \to 0$ in $L^1 (\R^N ) $. In particular, by possibly extracting a subsequence, we see that $u_n \to u$ as $n\to \infty $ a.e. and that there exists a function $w\in L^1 (\R^N ) $ such that $|u_n-u|^\sigma \le w$ a.e. It follows that $ |u_n|^\sigma \le C ( |u|^\sigma + w) \in L^1 (\R^N ) $.
Moreover, by Young's inequality
\begin{equation}  \label{fLemLocLips}
 B_n (y,\theta ) ^p\le C ( |u|^\sigma + |\tau _yu|^\sigma  + |u_n|^\sigma + |\tau _y u_n|^\sigma +
 |\tau _y u -u|^r).
\end{equation}
Since by~\eqref{fLemLipu} $\tau _y u -u\in L^r(\R^N ) $ for a.a. $y\in \R^N $,
we see that for a.a. $y\in \R^N $, $B_n (y,\theta ) ^p$ is dominated by an $L^1$ function.
Furthermore, $g'$ is continuous and $u_n\to u$ a.e., so that $B_n (y,\theta ) \to 0$ a.e. as $n\to \infty $. We deduce by dominated convergence and~\eqref{fLemLocLipc}  that
\begin{equation}  \label{fLemLocLipp}
\|A_2 (u,u_n,y)\| _{ L^p }^p \goto _{ n\to \infty  }0,
\end{equation}
for almost all $y\in \R^N $.
Next, it follows from~\eqref{fLemLocLipub} and H\"older's inequality that
\begin{equation*}
 \|A_2 (u,u_n,y)\| _{ L^p }\le C
 ( \|u\| _{ L^\sigma  }^\alpha +  \|u_n\| _{ L^\sigma  }^\alpha )  \|\tau _y u-u\| _{ L^r }
 \le C  \|\tau _y u-u\| _{ L^r }.
\end{equation*}
Since
\begin{equation*}
 \|\tau _y u -u \|_{L^r }^q  |y|^{-N- sq} \in L^1 (\R^N ) ,
\end{equation*}
by~\eqref{fLemLipu},
we see that $ \|A_2 (u,u_n,y)\| _{ L^p }^q  |y|^{-N- sq}$ is dominated by an $L^1$ function.
  Applying~\eqref{fLemLocLipp}, we deduce by dominated convergence that
$K(u, u_n) \to 0$ as $n\to \infty $. This contradicts~\eqref{fLemLocLipt} and completes the proof.
\end{proof}

\begin{lem} \label{eLemLip}
Let $0<s<1$ and $1\le p\le \infty $, $1\le q<\infty $.
Let $g\in C^1(\C, \C)$ with $g'$ bounded.
It follows that there exists a constant $C$ such that if $u,v\in \dot B^s  _{ p,q } (\R^N )$, then
\begin{equation} \label{eLemLip:u}
 \|g(v)- g(u)\| _{ \dot B^s  _{ p,q } } \le C  \|v-u\| _{ \dot B^s  _{ p,q } }  + K(u,v),
\end{equation}
where $K(u,v)$ satisfies
\begin{equation} \label{eLemLip:d}
 K(u,v) \le C  \|u\| _{ \dot B^s  _{ p,q } },
\end{equation}
and
\begin{equation} \label{eLemLip:t}
K(u, u_n) \goto  _{ n\to \infty  }0,
\end{equation}
if $(u_n) _{ n\ge 1 } \subset \dot B^s  _{ p,q } (\R^N )  $ and
$u\in  \dot B^s  _{ p,q } (\R^N )  $ are such that  $u_n \to u$ in $L^\mu   (\R^N )$ for some $1\le \mu  \le \infty $.
\end{lem}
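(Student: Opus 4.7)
The plan is to follow the architecture of the proof of Lemma~\ref{eLemLocLip} closely, with the simplification that the boundedness of $g'$ lets me avoid the H\"older-type splitting with the auxiliary exponent $\sigma$ and work directly in $L^p$ throughout. Concretely, I would use the same finite-difference characterisation
\[
\|w\|_{\dot B^s_{p,q}} \approx \Bigl(\int_{\R^N} \|\tau_y w - w\|_{L^p}^q\,|y|^{-N-sq}\,dy\Bigr)^{1/q},
\]
and introduce $A(u,v,y) = \tau_y[g(v)-g(u)] - [g(v)-g(u)]$. Exactly as in~\eqref{fLemLipc}, I decompose $A = A_1 + A_2$, where
\[
A_1(u,v,y) = [\tau_y(v-u)-(v-u)]\int_0^1 g'\bigl(v+\theta(\tau_y v - v)\bigr)\,d\theta,
\]
and $A_2$ gathers the term where the difference of $g'$ at the two base points acts on $\tau_y u - u$.

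The estimate on $A_1$ is immediate: since $|g'|$ is bounded, $|A_1(u,v,y)| \le C|\tau_y(v-u) - (v-u)|$, so after taking the $L^p$ norm and integrating against $|y|^{-N-sq}\,dy$ I obtain the Lipschitz term $C\|v-u\|_{\dot B^s_{p,q}}$ in~\eqref{eLemLip:u}. Defining
\[
K(u,v) = \Bigl(\int_{\R^N} \|A_2(u,v,y)\|_{L^p}^q\,|y|^{-N-sq}\,dy\Bigr)^{1/q},
\]
the pointwise bound $|A_2(u,v,y)| \le 2\|g'\|_\infty|\tau_y u - u|$ and the norm equivalence above yield~\eqref{eLemLip:d} directly, without any appeal to H\"older's inequality.

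The only genuinely substantive step is~\eqref{eLemLip:t}. I would argue by contradiction, assuming $K(u,u_n)\ge\varepsilon$ along some subsequence. Since $u_n\to u$ in $L^\mu(\R^N)$, I can extract a further subsequence along which $u_n \to u$ almost everywhere. Setting, in analogy with~\eqref{fLemLipud},
\[
B_n(y,\theta)(\cdot)= |\tau_y u - u|\,\bigl|g'(u_n+\theta(\tau_y u_n - u_n)) - g'(u+\theta(\tau_y u - u))\bigr|,
\]
the continuity of $g'$ together with the a.e.\ convergence of $u_n$ implies $B_n(y,\theta)\to 0$ a.e.\ in $(x,\theta)$ for each $y$; the uniform pointwise bound $B_n(y,\theta)\le 2\|g'\|_\infty|\tau_y u - u|$ supplies a fixed $L^p$ dominating function, so by dominated convergence $\|A_2(u,u_n,y)\|_{L^p}^p \to 0$ for a.a.\ $y$. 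Raising to the power $q/p$ and using the outer domination $\|A_2(u,u_n,y)\|_{L^p}^q\,|y|^{-N-sq} \le C\|\tau_y u - u\|_{L^p}^q\,|y|^{-N-sq} \in L^1(\R^N)$, a second application of dominated convergence gives $K(u,u_n)\to 0$, contradicting the assumption.

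The main obstacle is the usual double passage to the limit in $K(u,u_n)$: once in $x$ (for fixed $y$) and once in $y$, each requiring its own dominating function. Both dominations are furnished for free by the boundedness of $g'$, which is why this lemma is strictly easier than Lemma~\ref{eLemLocLip}; the weaker hypothesis that $u_n\to u$ merely in some $L^\mu$ (rather than in $L^\sigma$) is harmless because only a.e.\ convergence is actually used in the limiting argument.
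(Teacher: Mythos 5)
Your proposal is correct and is essentially the paper's own argument: the paper proves this lemma with the single line that the proof of Lemma~\ref{eLemLocLip} is easily adapted, and your adaptation (drop the H\"older/$L^\sigma$ step, use $|g'|\le C$ both for the Lipschitz bound on $A_1$ and for the dominating functions on $A_2$, then extract a.e.\ convergence from the $L^\mu$ convergence and apply dominated convergence twice) is exactly that adaptation. The only caveat, equally present in the paper's terse proof, is that the inner dominated-convergence step as you wrote it requires $p<\infty$ (for $p=\infty$ a.e.\ convergence plus domination does not give $L^\infty$ convergence of $A_2(u,u_n,y)$), but the lemma is only used with $p=2$ in the paper.
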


\begin{proof}
The proof of Lemma~\ref{eLemLocLip} is easily adapted.
\end{proof}

\begin{rem} \label{eLemLocLipd}
In the particular case when $g(u) =  |u|^\alpha u$ with $\alpha >0$,  the estimate~\eqref{eLemLocLip:d} can be refined. More precisely,
\begin{equation} \label{feRemBSu}
 \|g(v)- g(u)\| _{ \dot B^s  _{ p,q } } \le C \|v\| _{ L^\sigma  }^\alpha
   \|v-u\| _{ \dot B^s  _{ r,q } }  + C  \|u\| _{ \dot B^s  _{ r,q } }
    \|u-v\| _{ L^\sigma  }^\alpha ,
\end{equation}
if $0<\alpha \le 1$ and
\begin{multline} \label{feRemBSd}
 \|g(v)- g(u)\| _{ \dot B^s  _{ p,q } } \le C \|v\| _{ L^\sigma  }^\alpha
   \|v-u\| _{ \dot B^s  _{ r,q } } \\ + C  \|u\| _{ \dot B^s  _{ r,q } }
   ( \|u\| _{ L^\sigma  }^{\alpha -1} + \|v\| _{ L^\sigma  }^{\alpha -1})
    \|u-v\| _{ L^\sigma  },
\end{multline}
if $\alpha \ge 1$.
Indeed, note that $\partial _z g(z)= (1+ \frac {\alpha } {2})  |z|^\alpha $ and $\partial  _{  \overline{z}  }g(z)= \frac {\alpha  } {2}  |z|^{\alpha -2} z^2$.
We claim that,
\begin{equation} \label{feRemBSt}
 |\,  |z_1|^\alpha - |z_2|^\alpha  |\le
 \begin{cases}
 \alpha ( |z_1|^{\alpha -1} +  |z_2|^{\alpha -1})  |z_1-z_2|
 & \text{if }\alpha \ge 1, \\
  |z_1-z_2|^\alpha &  \text{if }0<\alpha \le 1,
 \end{cases}
\end{equation}
and
\begin{equation} \label{feRemBSq}
 |\,  |z_1|^{\alpha -2} z_1^2 -  |z_2|^{\alpha -2} z_2^2  |\le
 \begin{cases}
  C ( |z_1|^{\alpha -1} +  |z_2|^{\alpha -1})   |z_1-z_2| & \text{if }\alpha \ge 1, \\
C  |z_1-z_2|^\alpha &  \text{if }0<\alpha \le 1.
 \end{cases}
\end{equation}
Assuming~\eqref{feRemBSt}-\eqref{feRemBSq},
estimates~\eqref{feRemBSu} and~\eqref{feRemBSd} follow from the argument used at the beginning of the proof of Lemma~\ref{eLemLocLip}. Indeed, the left-hand side of~\eqref{fLemLocLipd} (where $A_2$ is defined by~\eqref{fLemLipc})
  is easily estimated by applying~\eqref{feRemBSt}-\eqref{feRemBSq} and H\"older's inequality.
It remains to prove the claim~\eqref{feRemBSt}-\eqref{feRemBSq}.
The first three estimates are quite standard, and we only prove the last one.
We assume, without loss of generality, that $0< |z_2|\le  |z_1|$.
Note first that
\begin{equation*}
\begin{split}
|\,  |z_1|^{\alpha -2} z_1^2 -  |z_2|^{\alpha -2} z_2^2 | & =
\Bigl|    |z_1|^{\alpha }  \Bigl( \frac {z_1^2} { |z_1|^2} - \frac {z_2^2} { |z_2|^2}  \Bigr)
+ \frac {z_2^2} { |z_2|^2}  ( |z_1|^\alpha -  |z_2|^\alpha ) \Bigr|
\\ & \le |z_1|^{\alpha }  \Bigl| \frac {z_1^2} { |z_1|^2} - \frac {z_2^2} { |z_2|^2}  \Bigr|
+  |z_1- z_2|^\alpha .
\end{split}
\end{equation*}
Next,
\begin{multline*}
 |z_1|^{\alpha }  \Bigl| \frac {z_1^2} { |z_1|^2} - \frac {z_2^2} { |z_2|^2}  \Bigr|
  \le 2  |z_1|^{\alpha }  \Bigl| \frac {z_1} { |z_1|} - \frac {z_2} { |z_2|}  \Bigr|
 \\  = 2  |z_1|^{\alpha -1 }  \Bigl| z_1-z_2 +
   \frac {z_2} { |z_2|} ( |z_2| - |z_1|)  \Bigr|
 \le 4   |z_1|^{\alpha -1 }   |z_1- z_2|.
\end{multline*}
If $ |z_1-z_2| \le  |z_1|$, then (since $\alpha \le 1$) $ |z_1|^{\alpha -1 }   |z_1- z_2| \le |z_1- z_2|^\alpha $. If $ |z_1-z_2| \ge  |z_1|$, then (recall that $ |z_1-z_2|\le  |z_1|+  |z_2|\le 2 |z_1|$)
we see that $ |z_1|^{\alpha -1 }   |z_1- z_2| \le 2 |z_1|^\alpha \le 2 |z_1-z_2|^\alpha $. This shows the second estimate in~\eqref{feRemBSq}.
\end{rem}

\section{Proof of Theorem~$\ref{eMain}$
in the subcritical case $\alpha <\frac {4} {N-2s}$} \label{Proof}

Throughout this section, we assume $\alpha <\frac {4} {N-2s}$ and we use the admissible pair
$(\gamma ,\rho )$ defined by~\eqref{fchysecpzt}.
We need only show that, given $\varphi \in H^s (\R^N ) $, there exists $T = T( \| \varphi \| _{ H^s })>0$   such that if $ \|\varphi \| _{ H^s }< M$ then $\Tma (\varphi )>T$ and such that if $\varphi _n\to \varphi $ in $H^s (\R^N ) $, then $\Tma (\varphi _n)>T$ for all sufficiently large $n$ and
 the corresponding solution $u_n$ of~\eqref{NLS} satisfies $u_n\to u$ in
$L^q ((0, T),B^s_{ r,2} (\R^N ))$ for all admissible pair $(q,r)$.
Since $T= T( \|\varphi \| _{ H^s })$, properties~\eqref{eMain:u} and~\eqref{eMain:d} easily follow by a standard iteration argument.

We note that by Theorem~\ref{echysecpzu} there exists $T=T( \| \varphi \| _{ H^s })>0$ such that if $ \|\varphi \| _{ H^s }<M$ then $\Tma (\varphi )>T$.
Moreover, by possibly choosing $T$ smaller (but still depending on $ \|\varphi \| _{ H^s }$), if $\varphi _n\to \varphi $ in $H^s (\R^N ) $, then
$\Tma (\varphi _n)>T$ for all sufficiently large $n$ and the corresponding solution $u_n$ of~\eqref{NLS} satisfies
\begin{equation} \label{fConvPu}
\sup  _{ n\ge 1 }  \|u_n\| _{ L^q((0,T), B^s _{ r,2 }) }< \infty ,
\end{equation}
and
\begin{equation} \label{fConvPub}
u_n \goto _{ n\to \infty  }u  \text{ in } L^{q}((0,T), B^{s-\varepsilon } _{ r ,2 } (\R^N ) )
\cap C([0,T], H^{s-\varepsilon } (\R^N ) ),
\end{equation}
for all $\varepsilon >0$ and all admissible pair $(q,r)$.
It is not specified in Theorem~\ref{echysecpzu} that,
in the subcritical case, $T$ can be bounded from below in terms of $ \|\varphi \| _{ H^s }$ but this is immediate from the proof.

We set
\begin{equation} \label{fSuplt}
\sigma = \frac {N(\alpha +2)} {N-2s} >\rho ,
\end{equation}
so that by Sobolev's embedding
\begin{equation}  \label{fSupltb}
\dot B^s  _{ \rho ,2 } (\R^N ) \hookrightarrow L^\sigma  (\R^N ) ,
\end{equation}
and we claim that
\begin{equation} \label{fZsplu}
u_n \goto _{ n\to \infty  }u  \text{ in } L^{\gamma -\varepsilon } ((0,T),
L^\sigma  (\R^N ) ),
\end{equation}
for all $0<\varepsilon \le \gamma -1$.
This is a consequence of~\eqref{fConvPub}. (In fact,
if we could let $\varepsilon =0$ in~\eqref{fConvPub}, then we would obtain by~\eqref{fSupltb} convergence in  $L^{\gamma } ((0,T),  L^\sigma  (\R^N ) )$.)
Indeed, given $\eta >0$ and small, we have $B^{s- \frac {\eta N} {\rho (\rho +\eta)} } _{ \rho +\eta,2 } (\R^N ) \hookrightarrow L^\sigma  (\R^N ) $.
If $\eta$ is sufficiently small, $\rho +\eta <2N/ (N-2)^+$ so that there exists
 $\gamma _\eta $ such that $(\gamma _\eta ,\rho +\eta)$ is an admissible pair,
 and we deduce from~\eqref{fConvPub} that $u_n \to u$ in $L^{\gamma _\eta }((0,T), L^\sigma (\R^N ))$.   Since $\gamma _\eta \to \gamma $ as $\eta \to 0$, we conclude that~\eqref{fZsplu} holds.

We  decompose $g$ in the form $g= g_1 +g_2$ where $g_1, g_1 \in C^1(\C, \C)$, $g_1(0)= g_2(0)= 0$ and
\begin{gather}  \label{fConvPs}
 |g'_1  (u)|\le C , \\
  |g_2' (u)|\le C  |u|^\alpha ,
\end{gather}
for all $u\in \C$. By Strichartz estimates in Besov spaces (see Theorem~2.2 in~\cite{CWHs}),
given any admissible pair $(q,r)$ there exists a constant $C$ such that
\begin{multline} \label{fConvPp}
\|u_n-u \| _{ L^q ((0,T), \dot B^s _{ r ,2 } )}
 \le C  \|\varphi _n-\varphi \| _{ \dot H^s } \\ +  C\| g_1(u_n)- g_1(u)\| _{ L^{ 1} ((0, T), \dot B^s  _{ 2,2}) }
  +  C\| g_2(u_n)- g_2(u)\| _{ L^{\gamma '} ((0, T), \dot B^s  _{ \rho ' ,2}) }.
\end{multline}
We estimate the last two terms in~\eqref{fConvPp}
by applying Lemmas~\ref{eLemLip} and~\ref{eLemLocLip}, respectively.
We first apply Lemma~\ref{eLemLip}  to $g_1$ with $q=p=2$ and we obtain
\begin{equation} \label{fSuplu}
 \|g_1 (u_n)-g_1 (u)\| _{ \dot B^s  _{ 2,2 } }\le C  \|u_n -u \| _{ \dot B^s  _{ 2,2 } }
 + K_1(u, u_n).
\end{equation}
Next, we apply Lemma~\ref{eLemLocLip} to $g_2$ with $q=2$, $r=\rho $ and $p=\rho '$,
and we obtain
\begin{equation} \label{fSupld}
 \|g_2 (u_n)-g_2(u)\| _{ \dot B^s  _{ \rho ' ,2 } }\le C
  \|u_n\| _{ L^\sigma  }^\alpha  \|u_n -u \| _{ \dot B^s  _{ \rho ,2 } }
 + K_2(u, u_n),
\end{equation}
where $\sigma $ is defined by~\eqref{fSuplt}.
Applying H\"older's inequality in time, we deduce from~\eqref{fSuplu} that
\begin{multline} \label{fSuplq}
\|g_1 (u_n)- g_1 (u)\|  _{ L^{1}((0,T), \dot B^s_{2,2} ) } \\ \le C
T   \|u_n-u\| _{L^{\infty } ((0,T), \dot B^s_{2,2}) }
 +  \|K_1(u, u_n)\| _{ L^{1}(0,T) },
\end{multline}
and from~\eqref{fSupld}  that
\begin{multline} \label{fSuplc}
\|g_2 (u_n)- g_2 (u)\|  _{ L^{\gamma '}((0,T), \dot B^s_{\rho ',2} ) } \\  \le C
T^{\frac {4- \alpha (N-2s)} {4}}
\|u_n \|  _{ L^{\gamma } ((0,T), L^{\sigma } ) }^\alpha
 \|u_n-u\| _{L^{\gamma } ((0,T), \dot B^s_{\rho ,2}) }  \\
 +  \|K_2(u,u_n)\| _{ L^{\gamma '}(0,T) }.
\end{multline}
Note that  $ \|u_n \|  _{ L^{\gamma } ((0,T), L^{\sigma } ) }^\alpha $ is bounded
 by~\eqref{fConvPu} and~\eqref{fSupltb}. Thus we see that, by possibly choosing $T$ smaller (but still depending on $ \|\varphi \| _{ H^s }$), we can absorb the first term in the right hand side of~\eqref{fSuplq} by the left hand side of~\eqref{fConvPp} (with the choice $(q,r)= (\infty ,2)$),
 and similarly for~\eqref{fSuplc} (with the choice $(q,r)= (\gamma  ,\rho )$).
 By doing so, we deduce from~\eqref{fConvPp} that
\begin{multline} \label{fConvPpb}
 \|u_n-u \| _{ L^\gamma ((0,T), \dot B^s _{ \rho ,2 } )}+  \|u_n -u \| _{ L^\infty ((0,T), \dot H^s) }
\\  \le C  \|\varphi _n-\varphi \| _{ \dot H^s } +  C \|K_1(u, u_n)\| _{ L^{1}(0,T) } +  C \|K_2(u,u_n)\| _{ L^{\gamma '}(0,T) }.
\end{multline}
Next, we  note that by~\eqref{eLemLip:d} $K_1 (u, u_n)\le  \|u (t)\| _{ \dot B^s _{ 2,2 } }$ for all $t\in (0,T)$ and that $u\in L^\infty ((0,T), B^s _{ 2,2 } (\R^N ))$.
Moreover, $u_n\to u$ in $C([0,T], L^2 (\R^N ) )$, so that $u_n(t)\to u(t)$ in $L^2 (\R^N ) $ for all $t\in (0,T)$. Applying~\eqref{eLemLip:t} (with $\mu  =2$),  we deduce that $K_1(u_n, u) \to 0$ for all $t\in (0,T)$. By dominated convergence, we conclude that
\begin{equation} \label{fSuplp}
 \|K_1(u, u_n)\| _{ L^{1}(0,T) }  \goto _{ n\to \infty  }0.
\end{equation}
We now show that
\begin{equation} \label{fSuplh}
 \|K_2(u, u_n)\| _{ L^{\gamma '}(0,T) }  \goto _{ n\to \infty  }0.
\end{equation}
Indeed, suppose  by contradiction that there exist a subsequence, still denoted by $(u_n) _{ n\ge 1 }$, and $\delta  >0$ such that
\begin{equation} \label{fSupln}
 \|K_2(u, u_n)\| _{ L^{\gamma '}(0,T) } \ge \delta  .
\end{equation}
We note that by~\eqref{eLemLocLip:t} and Young's inequality
\begin{equation} \label{fSPu}
\begin{split}
K_2 (u_n, u)^{\gamma '} &\le C ( \|u_n \| _{ L^\sigma  }^{\alpha \gamma '}+  \|u \| _{ L^\sigma  }^{\alpha \gamma '})  \|u\| _{ \dot B^s _{ \rho ,2 }}^{\gamma ' }  \\&  \le
C(  \|u_n \| _{ L^\sigma  }^{\frac {\alpha \gamma } {\gamma -2}}+
  \|u \| _{ L^\sigma  }^{\frac {\alpha \gamma } {\gamma -2}}+   \|u\| _{ \dot B^s _{ \rho ,2 }}^{\gamma  } ) .
\end{split}
\end{equation}
Since $\alpha \gamma /(\gamma -2) <\gamma $, we deduce from~\eqref{fZsplu} that, after possibly extracting a subsequence, $K_2 (u_n, u)^{\gamma '} $ is dominated by an $L^1$ function.
It also follows from~\eqref{fZsplu} that, after possibly extracting a subsequence,
$u_n(t) \to u(t)$ in $L^\sigma  (\R^N ) $ for a.a. $t\in (0,T)$ so that, applying~\eqref{eLemLocLip:q},
\begin{equation} \label{fSPd}
K _2(u, u_n) \goto _{ n\to \infty  }0,
\end{equation}
for a.a. $t\in (0,T)$. By dominated convergence, $ \|K_2(u, u_n)\| _{ L^{\gamma '}(0,T) }  \to 0$. This contradicts~\eqref{fSupln}, thus proving~\eqref{fSuplh}.

Finally, it follows from~\eqref{fConvPpb}, \eqref{fSuplp} and~\eqref{fSuplh}
that
\begin{equation} \label{fLstih}
 \|u_n-u \| _{ L^\gamma ((0,T), \dot B^s _{ \rho ,2 } )}+  \|u_n -u \| _{ L^\infty ((0,T), \dot H^s) } \goto  _{ n\to \infty  } 0.
\end{equation}
Given any admissible pair $(q,r)$, we deduce from~\eqref{fConvPp}, \eqref{fSuplq}, \eqref{fSuplc}, \eqref{fSuplp}, \eqref{fSuplh} and~\eqref{fLstih}
that    $ \|u_n-u \| _{ L^q ((0,T), \dot B^s _{ r ,2 } )} \to 0$ as $n\to \infty $. This completes the proof.

\section{Proof of Theorem~$\ref{eMain}$
in the critical case $\alpha =\frac {4} {N-2s}$} \label{ProofCr}

Throughout this section, we assume $\alpha =\frac {4} {N-2s}$ and we use the admissible pair
$(\gamma ,\rho )$ defined by~\eqref{fchysecpzt}.
We recall that by assumption~\eqref{fSbCr},
\begin{equation} \label{fCritub}
 |g'(u)|\le C  |u|^\alpha .
\end{equation}
The argument of the preceding section fails essentially at one point. More precisely,
here $\alpha \gamma /(\gamma -2) = \gamma $, so that the convergence~\eqref{fZsplu} does not imply as in Section~\ref{Proof} (see~\eqref{fSPu})  that $K_2 (u_n, u)^{\gamma '} $ is dominated by an $L^1$ function.
To overcome this difficulty, we show in Lemma~\ref{eKey} below that~\eqref{fZsplu} holds with $\varepsilon =0$. This is the key difference with the subcritical case,
and the rest of the proof is very similar.

We now go into details and we first recall some facts concerning the local theory.
(See e.g.~\cite{CWHs}.)
We define
\begin{equation} \label{fDefng}
{\mathcal G}(u) (t)= \int _0^t e^{i(t-s)\Delta } g(u(s))\, ds.
\end{equation}
If $(\gamma ,\rho )$ is the admissible pair defined by~\eqref{fchysecpzt}, then
\begin{equation*}
 \|g(u)\| _{ L^{\gamma '}((0,T), B^s _{ \rho ',2 } )} \le  C\| u\| _{ L^\gamma ((0,T), B^s _{ \rho ,2 } )}^{\alpha +1},
\end{equation*}
and
\begin{multline*}
 \|g(v)- g(u)\| _{ L^{\gamma '}((0,T), L^{\rho '} )} \\
  \le  C(\| u\| _{ L^\gamma ((0,T), B^s _{ \rho ,2 } )}^{\alpha }+ \| u\| _{ L^\gamma ((0,T), B^s _{ \rho ,2 } )}^{\alpha })  \|u-v\| _{ L^\gamma ((0,T), L^\rho ) },
\end{multline*}
where the constant $C$ is independent of $T>0$.
By Strichartz estimates in Besov spaces, it follows that the problem~\eqref{INLS}  can be solved by a fixed point argument in the set ${\mathcal E}=  \{u\in L^\gamma ((0,T), B^s _{ \rho ,2 } (\R^N ));
 \|u\| _{ L^\gamma ((0,T), B^s _{ \rho ,2 } )}\le \eta\}$ equipped with the distance ${\mathrm d}(u,v)=  \|u-v\|_{ L^\gamma ((0,T), L^\rho ) }$ for $\eta>0$ sufficiently small.
(Note that $({\mathcal E}, {\mathrm d})$
is a complete metric space.  Indeed, $L^\gamma ((0,T),
B^s_{\rho ,2})$ is reflexive, so its closed ball of radius $\eta$ is weakly
compact.)
By doing so, we see that there exists $\delta _0>0$
(independent of $\varphi \in H^s (\R^N ) $ and $T>0$) such that if
\begin{equation} \label{fPrelimu}
 \| e^{i\cdot  \Delta } \varphi \|_{ L^\gamma ((0,T), B^s _{ \rho ,2 }) } < \delta \le \delta _0,
\end{equation}
then $\Tma (\varphi )>T$ and the corresponding solution of~\eqref{INLS} satisfies
\begin{equation} \label{fPrelimd}
 \| u \|_{ L^\gamma ((0,T), B^s _{ \rho ,2 }) } \le 2 \delta.
\end{equation}
Moreover, given any admissible pair $(q,r)$, there exists a constant $C(q,r)$ such that
\begin{equation} \label{fPrelimt}
 \| u \|_{ L^q ((0,T), B^s _{ r ,2 }) } \le \delta C(q,r).
\end{equation}
In addition, if $\varphi,\psi $ both satisfy~\eqref{fPrelimu} and $u,v$ are the corresponding solutions of~\eqref{NLS}, then
\begin{equation} \label{fPrelimq}
 \| u -v\|_{ L^\gamma  ((0,T),L^\rho ) } \le C  \|\varphi -\psi \| _{ L^2 }.
\end{equation}
We also recall the Strichartz estimate in Besov spaces
\begin{equation} \label{fSBesu}
\| e^{i\cdot  \Delta }  \varphi \|_{ L^q((0,T), B^s _{ r,2 }) } \le C(q,r) \|\varphi \| _{ H^s },
\end{equation}
for every admissible pair $(q,r)$.  In particular,
\begin{equation} \label{fSplzu}
\| e^{i\cdot  \Delta }  \psi  \|_{ L^\gamma ((0,T), B^s _{ \rho ,2 }) } \le
\| e^{i\cdot  \Delta }  \varphi   \|_{ L^\gamma ((0,T), B^s _{ \rho ,2 }) } +
C \|\psi -\varphi \| _{ H^s },
\end{equation}
for every $\varphi ,\psi \in H^s (\R^N ) $.

We now fix $\varphi \in H^s (\R^N ) $  satisfying~\eqref{fPrelimu}
and we consider $(\varphi _n) _{ n\ge 1 }\subset H^s (\R^N ) $ such that
\begin{equation}  \label{fPrelimc}
 \|\varphi _n- \varphi \| _{ H^s } \goto  _{ n\to \infty  }0.
\end{equation}
We denote by $u$ and $(u_n) _{ n\ge 1 }$ the corresponding solutions of~\eqref{NLS}.

We show that, by letting $\delta _0$ in~\eqref{fPrelimu} possibly smaller (but still independent of $T$ and $\varphi $),
\begin{equation} \label{fLastu}
 \| u_n-u \| _{ L^q((0,T) ,B^s _{ r,2 } )} \goto _{ n\to \infty  }0,
\end{equation}
for all admissible pair $(q,r)$.
Indeed, arguing as in the subcritical case, we see that (see the proof of~\eqref{fConvPpb})
\begin{multline} \label{fConvPpbcrit}
 \|u_n-u \| _{ L^\gamma ((0,T), \dot B^s _{ \rho ,2 } )}
 \le C  \|\varphi _n-\varphi \| _{ \dot H^s } \\+  C
 \|u_n \|  _{ L^{\gamma } ((0,T), L^{\sigma } ) }^\alpha
 \|u_n-u \| _{ L^\gamma ((0,T), \dot B^s _{ \rho ,2 } )}
 +  \|K(u,u_n)\| _{ L^{\gamma '}(0,T) },
\end{multline}
where $K(u,u_n)$ is given by Lemma~\ref{eLemLocLip}.
We next observe that, using~\eqref{fPrelimd}, \eqref{fSupltb}  and possibly choosing $\delta _0$ smaller,
$C  \|u_n \|  _{ L^{\gamma } ((0,T), L^{\sigma } ) }^\alpha  \le 1/2$ for $n$ large.
 We then deduce from~\eqref{fConvPpbcrit}  that
\begin{equation} \label{fConvPpbcritb}
 \|u_n-u \| _{ L^\gamma ((0,T), \dot B^s _{ \rho ,2 } )}
 \le C  \|\varphi _n-\varphi \| _{ \dot H^s }+ C   \|K(u,u_n)\| _{ L^{\gamma '}(0,T) }.
\end{equation}
We claim that
\begin{equation} \label{fSPCru}
 \|K(u, u_n)\| _{ L^{\gamma '}(0,T) }  \goto _{ n\to \infty  }0,
\end{equation}
For proving~\eqref{fSPCru}, we use  the following lemma, whose proof is postponed until the end of this section.

\begin{lem} \label{eKey}
Under the above assumptions, and for $\delta _0$ in~\eqref{fPrelimu} possibly smaller (but still independent of $T$ and $\varphi $) it follows that
\begin{equation} \label{fPrelimcb}
 \|u_n -u\|  _{ L^\gamma ((0,T), L^\sigma ) } \goto  _{ n\to \infty  }0,
\end{equation}
where $\sigma $ is defined by~\eqref{fSuplt}.
\end{lem}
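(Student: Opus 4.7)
The plan is to estimate $u_n - u$ directly in $L^\gamma((0,T),L^\sigma)$, bypassing the intermediate Besov space $\dot B^s_{\rho,2}$ that produced the borderline integrability in Section~\ref{Proof}. The key observation is that, from~\eqref{fchysecpzt} and~\eqref{fSuplt}, the pair $(\gamma,\sigma)$ satisfies the $\dot H^s$-scaling $2/\gamma+N/\sigma=N/2-s$: it is not standard Strichartz-admissible, but composing the standard Strichartz estimate with the Sobolev embedding~\eqref{fSupltb} yields
\[
\|e^{it\Delta}\varphi\|_{L^\gamma((0,T),L^\sigma)}\le C\|\varphi\|_{H^s}.
\]
Following~\cite{TaoV}, in the critical case $\alpha(N-2s)=4$ one furthermore has the inhomogeneous counterpart
\[
\|\mathcal{G}(F)\|_{L^\gamma((0,T),L^\sigma)}\le C\|F\|_{L^{\gamma/(\alpha+1)}((0,T),L^{\sigma/(\alpha+1)})},
\]
with $\mathcal{G}$ defined by~\eqref{fDefng}; a direct scaling check confirms that the exponents on the right-hand side are compatible with the $\dot H^s$-scaling on the left precisely when $\alpha(N-2s)=4$.

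Granted these two estimates, applying Duhamel's formula to $u_n-u$ produces
\[
\|u_n-u\|_{L^\gamma((0,T),L^\sigma)} \le C\|\varphi_n-\varphi\|_{H^s} + C\|g(u_n)-g(u)\|_{L^{\gamma/(\alpha+1)}((0,T),L^{\sigma/(\alpha+1)})}.
\]
Because $A=0$ by~\eqref{fSbCr}, the pointwise bound $|g(u_n)-g(u)| \le C(|u_n|^\alpha+|u|^\alpha)|u_n-u|$ is immediate from~\eqref{fCritub}, and H\"older's inequality in space and time then yields
\[
\|g(u_n)-g(u)\|_{L^{\gamma/(\alpha+1)}((0,T),L^{\sigma/(\alpha+1)})} \le C\bigl(\|u\|_{L^\gamma L^\sigma}^\alpha+\|u_n\|_{L^\gamma L^\sigma}^\alpha\bigr)\|u_n-u\|_{L^\gamma((0,T),L^\sigma)}.
\]

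To close the estimate, note that by~\eqref{fPrelimd} and~\eqref{fSupltb} we have $\|u\|_{L^\gamma((0,T),L^\sigma)}\le C'\delta$, and similarly $\|u_n\|_{L^\gamma((0,T),L^\sigma)}\le C'\delta$ for all $n$ large: indeed, \eqref{fSplzu} together with~\eqref{fPrelimc} ensures that $\varphi_n$ itself satisfies~\eqref{fPrelimu} for $n$ large, so~\eqref{fPrelimd} applies to $u_n$ as well. By choosing $\delta_0$ in~\eqref{fPrelimu} sufficiently small, depending only on the absolute constants and not on $T$ or $\varphi$, so that $2CC'^\alpha\delta^\alpha\le 1/2$, we absorb the nonlinear term on the left and obtain
\[
\|u_n-u\|_{L^\gamma((0,T),L^\sigma)} \le 2C\|\varphi_n-\varphi\|_{H^s}\goto _{ n\to \infty  }0
\]
by~\eqref{fPrelimc}, which is~\eqref{fPrelimcb}.

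The hard part is the inhomogeneous non-admissible Strichartz estimate appearing in the second display above: its validity at the critical exponent lies strictly outside the classical Keel-Tao admissible range and requires a Foschi-type inhomogeneous Strichartz estimate (or, equivalently, the Christ-Kiselev lemma applied to an appropriate bilinear $TT^*$-variant) at the $\dot H^s$ scaling, exactly as in~\cite{TaoV}. Once this input is granted, the remainder is a routine H\"older-and-smallness-absorption scheme of the same flavor as in Section~\ref{Proof}; the essential improvement over the subcritical argument is that, working directly in $L^\gamma((0,T),L^\sigma)$ rather than $L^{\gamma-\varepsilon}((0,T),L^\sigma)$, the factor $\|u\|_{L^\gamma L^\sigma}^\alpha$ is genuinely small (not merely finite), so the absorption is allowed at the exact critical scaling.
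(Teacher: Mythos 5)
There is a genuine gap, and it is precisely at the step you yourself label ``the hard part'': the inhomogeneous estimate
\begin{equation*}
\|{\mathcal G}(F)\|_{L^\gamma((0,T),L^\sigma)}\le C\|F\|_{L^{\gamma/(\alpha+1)}((0,T),L^{\sigma/(\alpha+1)})}
\end{equation*}
is not only unproved in your argument, it is false in a substantial part of the parameter range. Your scaling check only verifies the necessary dimensional-analysis condition, which is never sufficient for inhomogeneous Strichartz estimates; the Foschi--Vilela--Kato results also require both pairs to be acceptable, and here (since $\gamma=\alpha+2$ in the critical case) the dual-side pair fails acceptability as soon as $\sigma\ge 2(\alpha+1)$, i.e.\ for instance $N=3$, $s\ge 1/2$. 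Concretely, for $N=3$, $s=1/2$, $\alpha=2$ your claim reads $\|{\mathcal G}(F)\|_{L^4_tL^6_x}\le C\|F\|_{L^{4/3}_tL^2_x}$, and this fails: take $F(t,x)=\chi_{[0,1]}(t)\,e^{it\Delta}\psi$ with $\psi=e^{-\frac{3}{2}i\Delta}\phi$, $\phi$ an $L^2$-normalized bump of width $\varepsilon$. Then $\|F\|_{L^{4/3}_tL^2_x}\approx 1$, while ${\mathcal G}(F)(t)=\min(t,1)\,e^{i(t-\frac32)\Delta}\phi$ refocuses at $t=\frac32$, so $\|{\mathcal G}(F)\|_{L^4((1,2),L^6)}\gtrsim\varepsilon^{-1/2}\to\infty$. (The same refocusing example, with the amplitude of the right-hand side computed by the dispersive estimate, rules out the whole range $\sigma\ge2(\alpha+1)$, which always occurs for $s$ close to $1$.) So the estimate you need cannot be obtained ``exactly as in Tao--Visan'': the point of that method is to choose the exotic exponents so that the acceptability-type conditions do hold, which your pair $(\gamma,\sigma)$ with dual exponent $\sigma/(\alpha+1)$ does not.

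The paper circumvents exactly this obstruction by not working directly in $L^\gamma L^\sigma$. It proves instead the convergence $\|u_n-u\|_{L^{q_0}((0,T),L^{\alpha+2})}\to0$, where $(q_0,r_0)$ is the admissible pair~\eqref{fRDu} with $\nu(r_0)=\alpha+2$; the relevant non-admissible estimate is then $\|{\mathcal G}(u_n)-{\mathcal G}(u)\|_{L^{q_0}L^{\alpha+2}}\le C\|g(u_n)-g(u)\|_{L^{q_0/(\alpha+1)}L^{(\alpha+2)/(\alpha+1)}}$, i.e.\ with \emph{conjugate} spatial exponents $\alpha+2$ and $(\alpha+2)'$, which is Lemma~2.1 of~\cite{RapDec} and is legitimate because $\alpha+2<2N/(N-2)$ (this is where $s<1$ enters, via~\eqref{fRDu}). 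The absorption scheme you describe (smallness from~\eqref{fPrelimd}, \eqref{fSplzu}, \eqref{fPrelimt}) is then carried out in $L^{q_0}L^{\alpha+2}$, and the desired convergence~\eqref{fPrelimcb} in $L^\gamma L^\sigma$ is recovered afterwards by H\"older interpolation in space and time between $L^{q_0}L^{\nu(r_0)}$ and a second norm $L^{\widetilde q}L^{\nu(\widetilde r)}$ that is merely bounded thanks to~\eqref{fPrelimt} and Sobolev's embedding. If you want to salvage your direct approach, you would have to replace $(\gamma,\sigma)$ by exponent pairs for which a Foschi-type theorem genuinely applies and then interpolate, which is in effect what the paper does.
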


Assuming Lemma~\ref{eKey}, suppose by contradiction that there exist a subsequence, still denoted by $(u_n) _{ n\ge 1 }$ and $\beta  >0$ such that
\begin{equation} \label{fSPCrd}
 \|K(u, u_n)\| _{ L^{\gamma '}(0,T) } \ge \beta
\end{equation}
Note that (see~\eqref{fSPu})
$K (u, u_n)^{\gamma '}\le
C(  \|u_n \| _{ L^\sigma  }^{\frac {\alpha \gamma } {\gamma -2}}+
  \|u \| _{ L^\sigma  }^{\frac {\alpha \gamma } {\gamma -2}}+   \|u\| _{ \dot B^s _{ \rho ,2 }}^{\gamma  } ) $.
Since $\alpha \gamma /(\gamma -2) =\gamma $, this implies
\begin{equation} \label{fSPCrdb}
K (u, u_n)^{\gamma '}\le
C(  \|u_n \| _{ L^\sigma  }^\gamma +
  \|u \| _{ L^\sigma  }^\gamma +   \|u\| _{ \dot B^s _{ \rho ,2 }}^{\gamma  } ) .
\end{equation}
It follows from~\eqref{fPrelimcb} and~\eqref{fSPCrdb} that, after possibly extracting a subsequence, $K (u, u_n)^{\gamma '}$ is bounded by an $L^1$ function. Moreover, we may also assume that $u_n(t)\to u(t)$ in $L^\sigma  (\R^N ) $ for a.a. $t\in (0,T)$, so that by~\eqref{eLemLocLip:q}  $K(u,u_n) \to 0$ for a.a. $t\in (0,T)$. By dominated convergence, we deduce that  $\|K (u, u_n)\| _{ L^{\gamma '}(0,T) } \to 0$. This contradicts~\eqref{fSPCrd}, thus proving~\eqref{fSPCru}.

Applying~\eqref{fConvPpbcritb}, \eqref{fPrelimc} and~\eqref{fSPCru}, we conclude that
\begin{equation*}
 \|u_n-u \| _{ L^\gamma ((0,T), \dot B^s _{ \rho ,2 } )} \goto _{ n \to \infty  }0.
\end{equation*}
By Lemma~\ref{eLemLocLip} (and using again~\eqref{fSPCru}), this implies that
\begin{equation*}
 \|g(u_n) -g(u) \| _{ L^{\gamma '}((0,T), \dot B^s _{ \rho ',2 } )} \goto _{ n \to \infty  }0.
\end{equation*}
 Applying Strichartz estimates, we conclude that~\eqref{fLastu} holds.

To conclude the proof, we argue as follows.
 We let $ \widetilde{T} $ be the supremum of all $0<T<\Tma (\varphi )$ such that if $\varphi _n\to \varphi $ in $H^s (\R^N ) $, then $\Tma (\varphi _n)>T$ for all sufficiently large $n$ and
$u_n \to u$ in $L^q((0,T), B^s _{ r,2 }(\R^N ))$ as $n\to \infty $ for all admissible pair $(q,r)$.
We have just shown that $ \widetilde{T} >0$. We claim that $ \widetilde{T}=\Tma (\varphi ) $. Indeed, otherwise $ \widetilde{T} <\Tma$. Since $u\in C([0, \widetilde{T} ], H^s (\R^N ) )$,
it follows that $\union  _{ 0\le t\le  \widetilde{T}  } \{ u(t) \}$ is a compact subset of $H^s (\R^N ) $.
Therefore, it follows from Strichartz estimates that there exists $T >0$ such that
\begin{equation} \label{fLastd}
\sup  _{ 0\le t\le  \widetilde{T}  }  \| e^{i \cdot \Delta } u(t) \| _{ L^\gamma ((0,T ), B^s _{ \rho ,2 }) }
\le \delta _0.
\end{equation}
We fix $0< \tau < \widetilde{T} $ such that $\tau +T > \widetilde{T} $. If $\varphi _n \to \varphi $ in $H^s (\R^N ) $, it follows from the definition of $ \widetilde{T} $ that $\Tma (\varphi _n) >\tau $ for all sufficiently large $n$ and that $u_n(\tau )\to u(\tau )$ in $H^s (\R^N ) $ as $n\to \infty $.
We then deduce from~\eqref{fLastd} and what precedes that $\Tma (u_n(\tau )) >T$ for all large $n$ and that $u_n (\tau + \cdot )\to u(\tau +\cdot )$ in $L^q((0,T), B^s _{ r,2 }(\R^N ))$
 for all admissible pair $(q,r)$. Thus we see that $\Tma (\varphi _n) >\tau +T$ for all large $n$ and that $u_n  \to u $ in $L^q((0,\tau +T), B^s _{ r,2 }(\R^N ))$  for all admissible pair $(q,r)$.
 This contradicts the definition of $ \widetilde{T} $. Thus $ \widetilde{T} = T$, which proves properties~\eqref{eMain:u} and~\eqref{eMain:d}.

To complete the proof, it thus remains to prove Lemma~\ref{eKey}.

\begin{proof} [Proof of Lemma~$\ref{eKey}$]
Given $2\le r< N/s$, we define $\nu (r) \in (r, \infty )$ by
\begin{equation} \label{fDSig}
\frac {1} {\nu (r)}= \frac {1} {r} - \frac {s} {N},
\end{equation}
so that
\begin{equation} \label{fDSigd}
B^s _{ r,2 }(\R^N )\hookrightarrow L^{\nu  (r)} (\R^N ),
\end{equation}
by Sobolev's embedding.
Note also that if $(\gamma ,\rho )$ is the admissible pair given by~\eqref{fchysecpzt}, then $\nu (\rho )= \sigma $ given by~\eqref{fSuplt}.

We first observe that we need only prove that
\begin{equation} \label{fPrelimcbdu}
 \|u_n -u\|  _{ L^{q_0} ((0,T), L^{\nu (r_0)} ) } \goto  _{ n\to \infty  }0,
\end{equation}
for some admissible pair $(q_0, r_0)$ for which $r_0<\frac {N} {s}$ (and $\nu (r_0)$ is defined by~\eqref{fDSig}).
Indeed, suppose~\eqref{fPrelimcbdu} hold. If $\sigma \ge \nu (r_0)$, then $\rho \ge r_0$.
We fix $ \rho < \widetilde{r} < \frac {N} {s}$ and we deduce
from~\eqref{fDSig} and H\"older's inequality in space and time that
\begin{equation*}
 \| u_n-u \| _{ L^\gamma ((0,T), L^\sigma ) } \le   \| u_n-u \| _{ L^{q_0}((0,T), L^{\nu (r_0)} )}^\theta
  \| u_n-u \| _{ L^{ \widetilde{q} }((0,T), L^{\nu ( \widetilde{r} )} )}^{1- \theta },
\end{equation*}
where $1/\rho = \theta /r_0 + (1-\theta )/  \widetilde{r} $.
Since $  \| u_n-u \| _{ L^{ \widetilde{q} }((0,T), L^{\nu ( \widetilde{r} )} )}^{1- \theta }$ is bounded by~\eqref{fPrelimt} and Sobolev's embedding~\eqref{fDSigd}, we see that~\eqref{fPrelimcb} follows. In the case $\sigma < \nu (r_0)$, then $\rho <r_0$ and we apply the same argument, this time with $( \widetilde{q},  \widetilde{r})= (\infty ,2)  $.

We now prove~\eqref{fPrelimcbdu} for $(q_0, r_0)$ defined by
\begin{equation} \label{fRDu}
q_0= \frac {2\alpha (\alpha +2)} {4-(N-2) \alpha }, \quad r_0= \frac {N(\alpha +2)} {N+s(\alpha +2)}.
\end{equation}
It is straightforward to check that $(q_0, r_0)$ is an admissible pair, that $r_0<\frac {N} {s}$ and that
\begin{equation} \label{fRDd}
\nu (r_0)= \alpha +2.
\end{equation}
We now use a Strichartz-type estimate for the non-admissible pair $(q_0, \alpha +2)$.
More precisely,  it follows from Lemma~2.1 in~\cite{RapDec}\footnote{for more general estimates of this type, see~\cite{Katot, Foschi, Vilela}.}
that there exists a constant
$C$ independent of $n$ and $T$ such that
\begin{equation} \label{fRDq}
\| {\mathcal G} (u_n)- {\mathcal G} (u)\| _{ L^{q_0} (\R, L^{\alpha +2})}\le C
  \| g(u_n)- g(u)\| _{ L^{ \frac {q_0} {\alpha +1} }(\R, L^{\frac {\alpha +2} {\alpha +1}} )}.
\end{equation}
On the other hand, we deduce from~\eqref{fCritub} and H\"older's inequality in space and time that
\begin{multline*}
 \|g(u_n) - g(u)\|_{ L^{ \frac {q_0} {\alpha +1} }(\R, L^{\frac {\alpha +2} {\alpha +1}} )} \\
 \le C (  \| u_n\| _{ L^{q_0} ((0, T), L^{\alpha +2}) }^\alpha  +
\| u\| _{ L^{q_0} ((0, T), L^{\alpha +2}) }^\alpha )  \|u_n-u\| _{ L^{q_0} ((0, T), L^{\alpha +2}) }.
\end{multline*}
Applying~\eqref{fDSigd} and~\eqref{fRDd}, we obtain
\begin{multline}  \label{fRDc}
 \|g(u_n ) - g(u)\|_{ L^{ \frac {q_0} {\alpha +1} }(\R, L^{\frac {\alpha +2} {\alpha +1}} )} \\
 \le C (  \| u_n\| _{ L^{q_0} ((0, T), B^s _{ r_0,2 }) }^\alpha  +
\| u\| _{ L^{q_0} ((0, T), B^s _{ r_0, 2 }) }^\alpha )  \|u_n-u\| _{ L^{q_0} ((0, T), L^{\alpha +2}) }.
\end{multline}
The pair $(q_0,r_0)$ being admissible, we deduce from Strichartz estimate~\eqref{fSBesu},
So\-bo\-lev's embedding~\eqref{fDSigd} and the identity~\eqref{fRDd}  that
\begin{equation}  \label{fRDs}
 \| e^{i \cdot \Delta  } (\varphi _n-\varphi ) \| _{ L^{q_0} (\R, L^{\alpha +2})} \le C  \|\varphi _n -\varphi  \| _{ H^s }.
\end{equation}
Using the equation~\eqref{INLS} for $u_n$ and $u$, together with~\eqref{fRDs}, \eqref{fRDq} and~\eqref{fRDc}, we see that
\begin{multline}  \label{fPrelimhb}
 \|u_n-u\| _{ L^{q_0} ((0, T), L^{\alpha +2}) } \le C  \|\varphi _n- \varphi \| _{ H^s } \\
 +  C  (  \| u_n\| _{ L^{q_0} ((0, T), B^s _{ r_0,2 }) }^\alpha  +
\| u\| _{ L^{q_0} ((0, T), B^s _{ r_0, 2 }) }^\alpha )  \|u_n -u\| _{ L^{q_0} ((0, T), L^{\alpha +2}) }.
\end{multline}
We now observe that if $\varphi $ satisfies~\eqref{fPrelimu}, then $\varphi _n$ also  satisfies~\eqref{fPrelimu} for $n$ large (see~\eqref{fSplzu}), so that by~\eqref{fPrelimt}
we have
\begin{equation} \label{fPrelimtbu}
\max\{  \| u \|_{ L^{q_0} ((0, T), B^s _{ r_0, 2 }) },
\| u_n \|_{ L^{q_0} ((0, T), B^s _{ r_0, 2 }) }\} \le \delta C(q_0, r_0),
\end{equation}
for all sufficiently large $n$.
Therefore, by possibly choosing $\delta _0$ smaller, we can absorb the last term in~\eqref{fPrelimhb} by the left-hand side, and we deduce that~\eqref{fPrelimcbdu} holds. This completes the proof.
\end{proof}

\section{Proof of Corollary~$\ref{eMaind}$} \label{ProofPwr}

The first statement of Corollary~\ref{eMaind} follows from Theorem~\ref{eMain}.
The Lipschitz dependence in the case $\alpha \ge 1$ follows from the estimate~\eqref{feRemBSd}.
Indeed, if $\rho $ is defined by~\eqref{fchysecpzt} then,  applying~\eqref{feRemBSd} with $p= \rho '$, $r=\rho $ and $q=2$, we obtain
\begin{equation*}
 \|g(v)- g(u)\| _{ \dot B^s _{ \rho ',2 } } \le C  \|v\| _{ L^\sigma  }^\alpha  \|v-u \| _{ \dot B^s _{ \rho ,2 } }
 + C  \|u\|_{ \dot B^s _{ \rho ,2 } } ( \|u\| _{ L^\sigma  }^{\alpha -1} +  \|v\| _{ L^\sigma  }^{\alpha -1} )
  \|u-v\| _{ L^\sigma  },
\end{equation*}
where $\sigma $ is given by~\eqref{fSuplt}.
Using the embedding~\eqref{fSupltb}, we deduce that
\begin{equation} \label{fEstLip}
 \|g(v)- g(u)\| _{ \dot B^s _{ \rho ',2 } } \le C
 (\|u \| _{ \dot B^s _{ \rho ,2 } }^\alpha + \|v \| _{ \dot B^s _{ \rho ,2 } }^\alpha )
 \|v-u \| _{ \dot B^s _{ \rho ,2 } }.
\end{equation}
In view of~\eqref{fEstLip}, it is easy to see that one can go through the local existence argument
by using Banach's fixed point theorem with the distance ${\mathrm d}(u,v) =  \|u-v\| _{ L^\gamma ((0,T), B^s _{ \rho ,2 }) }$ instead of  ${\mathrm d}(u,v) =  \|u-v\| _{ L^\gamma ((0,T),  L^\rho ) }$.
See for example the proofs of Theorems~4.9.1 (subcritical case) and~4.9.7 (critical case)
in~\cite{CLN} for details. It now follows from standard arguments that the resulting flow is Lipschitz in the sense of  Corollary~\ref{eMaind}.

\section*{Acknowledgments}
The authors thank W~Sickel for useful references on superposition operators in Besov spaces; and
 the anonymous referee for constructive remarks concerning the exposition of this article.


\begin{thebibliography}{99}

\bibitem{BerghL}{Bergh J. and L\"ofstr\"om J.:} {{\it Interpolation
spaces}, Springer, New York, 1976.}

\bibitem{BourdaudL}{Bourdaud G. and Lanza de Cristoforis M.:} {Regularity of the symbolic calculus in Besov algebras, Studia Math.  {\bf 184}   (2008),  no. 3, 271--298.}

\bibitem{BourdaudLS}{Bourdaud G., Lanza de Cristoforis M. and Sickel W.:} {Superposition operators and functions of bounded $p$-variation, Rev. Mat. Iberoam.  {\bf 22}   (2006),  no. 2, 455--487.}
\bibitem{BourdaudLSd}{Bourdaud G., Lanza de Cristoforis M. and Sickel W.:} {Superposition operators and functions of bounded $p$-variation. II,
Nonlinear Anal. {\bf 62}  (2005), no. 3, 483--517.}

\bibitem{BourdaudMS}{Bourdaud G., Moussai M and Sickel W.:}
{Composition operators on Lizorkin-Triebel spaces, preprint, 2009.}

\bibitem{BrezisM}{Brezis H. and Mironescu P.:}
{Gagliardo-Nirenberg, composition and products in fractional Sobolev spaces.
Dedicated to the memory of Tosio Kato,
J. Evol. Equ. {\bf 1} (2001), no. 4, 387--404. }

\bibitem{CLN}{Cazenave T.:} {{\it Semilinear
Schr\"o\-din\-ger  equations}, Courant Lecture Notes in Mathematics, {\bf 10}.
New York University, Courant Institute of Mathematical Sciences, New York;
American Mathematical Society, Providence, RI, 2003.}

\bibitem{CWHs}{Cazenave T. and Weissler F. B.:} {The Cauchy problem for the
critical nonlinear Schr\"o\-din\-ger equation in $H^{s}$, Nonlinear Anal.  {\bf 14} (1990), no.~10,
807--836.}

\bibitem{RapDec}{Cazenave T. and Weissler F. B.:} {Rapidly decaying solutions of the nonlinear Schr\"o\-din\-ger equation,
Comm. Math. Phys.  {\bf 147} (1992), no.~1, 75--100.}

\bibitem{Foschi}{Foschi D.:} {Inhomogeneous Strichartz estimates,
J. Hyperbolic Differ. Equ. {\bf 2} (2005), no.~1, 1--24.}

\bibitem{FurioliT}{Furioli G. and Terraneo E.:} {Besov spaces and unconditional
well-posedness for the nonlinear Schr\"o\-din\-ger equation, Commun. Contemp.
Math. {\bf 5} (2003), no. 3 349--367.}

\bibitem{Katod}{Kato T.:} {On nonlinear Schr\"o\-din\-ger equations, Ann. Inst. H.~Poin\-ca\-r\'e Phys. Th\'eor. {\bf 46} (1987), no. 1, 113--129.}

\bibitem{Katou}{Kato T.:} {On nonlinear Schr\"o\-din\-ger equations, II.
$H^s$-solutions and unconditional well-posedness,
J. Anal. Math. {\bf 67} (1995), 281--306.}

\bibitem{Katot}{Kato T.:}  {An $L^{q,r}$-theory for nonlinear Schr\"o\-din\-ger
equations, in {\it Spectral and Scattering Theory and Applications}, Advanced
Studies in Pure Mathematics {\bf 23} (1994), 223--238.}

\bibitem{KeelT}{Keel M. and Tao T.:} {Endpoint Strichartz inequalities,
Amer. J. Math.  {\bf 120} (1998), 955--980.}

\bibitem{KenigM}{Kenig C. and Merle F.:} {Global well-posedness, scattering and blow-up
for the energy-critical, focusing, non-linear Schr\"o\-din\-ger  equation in
the radial case,  Invent. Math.  {\bf 166}  (2006),  no.~3, 645--675.}

\bibitem{KillipV}{Killip R. and Visan M.:} {Nonlinear Schr\"o\-din\-ger  equations at critical regularity,  Clay Mathematics Proceedings, vol.  {\bf 10}, 2009.}

\bibitem{Pecher}{Pecher H.:} {Solutions of semilinear
Schr\"o\-din\-ger equations in $H^s$, Ann. Inst. H.~Poin\-ca\-r\'e Phys. Th\'eor. {\bf 67} (1997), no.~3, 259--296.}

\bibitem{Rogers}{Rogers K.M.:}
{Unconditional well-posedness for subcritical NLS in $H^s$,
C. R. Math. Acad. Sci. Paris {\bf 345}  (2007), no. 7, 395--398.}

\bibitem{RunstS}{Runst T and Sickel W.:}  {{\it Sobolev spaces of fractional order, Nemytskij operators, and nonlinear partial differential equations},  de Gruyter Series in Nonlinear Analysis and Applications, 3. Walter de Gruyter \& Co., Berlin, 1996.}

\bibitem{Strichartz}{Strichartz M.:} {Restrictions of Fourier transforms to
quadratic surfaces and decay of solutions of wave equations, Duke Math. J.
{\bf 44} (1977), 705--714.}

\bibitem{TaoV}{Tao T. and Visan M.:} {Stability of energy-critical nonlinear Schr\"o\-din\-ger  equations in high dimensions.  Electron. J. Differential Equations  {\bf 2005}, No. 118, 28 pp. }

\bibitem{Triebel}{Triebel H.:} {{\it Theory of function spaces}, Monographs in
Mathematics  {\bf 78}, Birkh\"auser Verlag, Basel, 1983.}

\bibitem{Tsutsumi}{Tsutsumi Y.:} {$L^{2}$-solutions for nonlinear Schr\"o\-din\-ger
equations and nonlinear groups, Funkcial. Ekvac.  {\bf 30} (1987), no.~1, 115--125.}

\bibitem{Vilela}{Vilela M.-C.:} {Inhomogeneous Strichartz estimates for the
Schr\"o\-din\-ger equation, Trans. Amer. Math. Soc.  {\bf 359} (2007), 2123--2136.}

\bibitem{WinT}{Win Y.Y.S. and Tsutsumi Y.:}
{Unconditional uniqueness of solution for the Cauchy problem of the nonlinear Schršdinger equation,  Hokkaido Math. J. {\bf 37}  (2008), no. 4, 839--859.}

\bibitem{Yajima}{Yajima K.:} {Existence of solutions for Schr\"o\-din\-ger
evolution equations, Comm. Math. Phys.  {\bf 110} (1987), 415--426.}

\end{thebibliography}
\end{document}